\newcommand{\R}{{\mathbb R}}
\newcommand{\C}{{\mathbb C}}
\newcommand{\E}{{\mathbb E}}
\newcommand{\ds}{\displaystyle}
\newcommand{\no}{\nonumber}
\newcommand{\be}{\begin{eqnarray}}
\newcommand{\ben}{\begin{eqnarray*}}
\newcommand{\en}{\end{eqnarray}}
\newcommand{\enn}{\end{eqnarray*}}
\newcommand{\pa}{\partial}
\newcommand{\ov}{\overline}
\newcommand{\dive}{{\rm div\,}}
\newcommand{\I}{{\rm Im}}
\newcommand{\Rt}{{\rm Re}}
\newcommand{\g}{\gamma}
\newcommand{\G}{\Gamma}
\newcommand{\vep}{\varepsilon}
\newcommand{\Om}{\Omega}
\newcommand{\om}{\omega}
\newcommand{\sig}{\sigma}
\newcommand{\la}{\lambda}
\newcommand{\wi}{\widehat}
\newcommand{\wid}{\widetilde}
\newcommand{\ra}{\rightarrow}
\newcommand{\se}{\setminus}
\newcommand{\ify}{\infty}
\newcommand{\on}{\text{on}}
\newcommand{\gin}{\text{in}}
\newcommand{\n}{\bm{n}}
\newcommand{\0}{\bm{0}}
\newcommand{\ch}{\check}
\newcommand{\na}{\nabla}
\newtheorem{remark}[theorem]{Remark}
\begin{document}
\renewcommand{\theequation}{\arabic{section}.\arabic{equation}}
\title{\bf
Convergence of the perfectly matched layer method for transient acoustic-elastic interaction
above an unbounded rough surface
}
\author{Changkun Wei\thanks{Academy of Mathematics and Systems Science,
Chinese Academy of Sciences, Beijing 100190, China and School of Mathematical Sciences,
University of Chinese Academy of Sciences, Beijing 100049, China ({\tt weichangkun@amss.ac.cn})}
\and
Jiaqing Yang\thanks{School of Mathematics and Statistics, Xi'an Jiaotong University,
Xi'an, Shaanxi, 710049, China ({\tt jiaq.yang@mail.xjtu.edu.cn; jiaqingyang@amss.ac.cn})}
\and
Bo Zhang\thanks{LSEC and Academy of Mathematics and Systems Sciences, Chinese Academy of Sciences,
Beijing 100190, China and School of Mathematical Sciences, University of Chinese Academy of Sciences,
Beijing 100049, China ({\tt b.zhang@amt.ac.cn})}
}
\date{}

\maketitle

\begin{abstract}
This paper is concerned with the time-dependent acoustic-elastic interaction problem associated with a
bounded elastic body immersed in a homogeneous air or fluid above an unbounded rough surface.
The well-posedness and stability of the problem are first established by using the Laplace transform
and the energy method. A perfectly matched layer (PML) is then introduced to truncate the interaction
problem above a finite layer containing the elastic body, leading to a PML problem in a finite strip domain.
We further establish the existence, uniqueness and stability estimate of solutions to the PML problem.
Finally, we prove the exponential convergence of the PML problem in terms of the thickness and parameter
of the PML layer, based on establishing an error estimate between the DtN operators of the original problem
and the PML problem.
\end{abstract}

\begin{keywords}
Acoustic wave equation, elastic wave equation, time domain, stability,
perfectly matched layer, exponential convergence, unbounded rough surface
\end{keywords}

\begin{AMS}
78A46, 65C30
\end{AMS}

\pagestyle{myheadings}
\thispagestyle{plain}
\markboth{C. Wei, J. Yang and B. Zhang}{PML for transient acoustic-elastic interaction
above a rough surface}

\section{Introduction}
\setcounter{equation}{0}

Consider the problem of scattering of acoustic waves by an elastic body immersed in a compressible,
inviscid fluid (air or water) in a half-space with an unbounded rough boundary.
This problem is also refereed to as a fluid-solid interaction problem which can be mathematically
formulated as an initial-boundary transmission problem and has been widely studied (see, e.g. \cite{HKS1989,Hsiao1994,MORAND1995,Luke1995,HKF2000,Hsiao2015,GLZ2017,Bao2018} and the references
quoted there).
This problem can also be categorized into the class of unbounded rough surface scattering problems,
which is the subject of intensive studies in the engineering and mathematics communities.
For the rough surface scattering problems, the usual Sommerfeld radiation condition and
Silver-M\"{u}ller radiation condition is not valid anymore due to the unbounded structure.
We refer to \cite{CB1998,CRZ1999,CM2005,CHP2006} for the mathematical analysis of
the time-harmonic case using both the integral equation method and the variational method.

In most of real-world problems, the model setting not only depends on the space, but also depends on
the time. Recently, this class of problems has attracted much attention due to their capability of
capturing wide-band signals and modeling more general material and nonlinearity
(see, e.g. \cite{Li2012,Chen2014,Wang2012,Wang2014} and the references quoted there).
In particular, the analysis of time-dependent scattering problems can be found in \cite{chen2009,Wang2014}
for the acoustic case, in \cite{chen2008,LLA2015,GL2016,GL2017} for the electromagnetic case
including the cases with bounded obstacles, diffraction gratings and unbounded surfaces,
and in \cite{Bao2018,GLZ2017,Hsiao2015,wy2019} for the time-dependent fluid-solid interaction problems
including the cases with bounded elastic bodies \cite{Bao2018,Hsiao2015},
locally rough surfaces \cite{wy2019} and unbounded layered structures \cite{GLZ2017}.

The perfectly matched layer (PML) method is a fast and effective method for solving unbounded 
scattering problems which was originally proposed by B\'erenger in 1994 for electromagnetic 
scattering problems \cite{Berenger1994}.
A large amount of work have been done since then to construct various PML absorption
layers \cite{TURKEL1998,Lassas1998,Teixeira2001,CW2003,CM2009,chen2009}.
The key idea of the PML method is to surround the computational domain with a specially designed
medium containing a finite thickness layer in which the scattered waves decay rapidly regardless
of the frequencies and incident angles, thereby greatly reducing the computational complexity of
the scattering problems. This makes the PML method a popular approach to solve a variety of
wave scattering problems \cite{TURKEL1998,Collino1998,BW2005,Bramble2007}.

The convergence of the PML method has always been a topic of interest to mathematicians.
There are a lot of works on the convergence of the time-harmonic PML method, most of which
focus on the exponential convergence of the PML method in terms of the thickness of the PML
layer for the case of bounded scatterers (see, e.g., \cite{Lassas1998,HSZ2003,Chen2005,BW2005,CZ2010}).
In 2009, Chandler-Wilde and Monk extend the PML method to time-harmonic scattering problems
by unbounded rough surfaces in \cite{CM2009}, where only the linear convergence of the PML method
was established in terms of the PML layer thickness.

Compared with the time-harmonic case, only several results are available for the rigorous convergence
analysis of the PML method for time-domain wave scattering problems.
For the case of time-domain acoustic scattering by a bounded scatterer, the exponential convergence
in terms of the thickness and parameter of the PML layer was proved in \cite{chen2009} for
a circular PML method and in \cite{Chen2012} for an uniaxial PML method.
The method used in \cite{chen2009,Chen2012} is based on the Laplace transform and complex
coordinate stretching technique.
For the case of time-domain electromagnetic scattering by bounded scatterers,
the exponential convergence of a spherical PML method was recently shown in \cite{wyz2019}
in terms of the thickness and parameter of the PML layer, based on a real coordinate stretching
technique associated with $[\Rt(s)]^{-1}$ in the Laplace domain, where $s\in\C_+$ is the
Laplace transform variable. Recently, a time-domain PML method was studied in \cite{Bao2018}
for the transient acoustic-elastic interaction problem, where a bounded elastic body is immersed
in a homogeneous, compressible, inviscid fluid (air or water) in $\R^2$.
The well-posedness and stability estimate of the PML solution have been established,
but no convergence analysis of the PML method is given in \cite{Bao2018}.

In this paper, we study the time-domain PML method for the transient acoustic-elastic interaction 
problem associated with a bounded elastic body immersed in a homogeneous, compressible,
inviscid fluid (air or water) above an unbounded rough surface.
Our purpose is to introduce a time-domain PML layer to truncate the unbounded domain 
of the interaction problem above a finite layer in the $x_3$ direction containing the elastic body,
leading to a PML problem in a finite strip domain.
The idea used in \cite{chen2009,Chen2012} to construct the PML layer seems difficult to apply 
to the transient acoustic-elastic interaction problem considered in this paper. 
Motivated by \cite{wyz2019}, we make use of the real coordinate stretching technique associated 
with $[\Rt(s)]^{-1}$ in the Laplace domain with the Laplace transform variable $s\in\C_+$.
The well-posedness and stability estimate of the PML problem are then established,
by employing the Laplace transform and the energy method.
Further, we establish the error estimate between the Dirichlet-to-Neumann (DtN) operators of 
the original problem and the PML problem, which is then used to prove the exponential convergence 
of the PML method in terms of the thickness and parameters of the PML layer.

The outline of this paper is as follows. In Section \ref{sec2}, we first formulate the transient 
interaction problem and then use the exact transparent boundary condition (TBC) to reduce the  
unbounded interaction problem into an equivalent initial-boundary transmission problem in a finite 
strip domain. In addition, the well-posedness and stability are also studied for the reduced problem. 
In Section \ref{sec3}, we first propose the time-domain PML method for the acoustic-elastic interaction 
problem, based on the real coordinate stretching technique, and then establish its exponential 
convergence in terms of the thickness and parameters of the PML layer.
Conclusions are given in Section \ref{sec4}.

\section{The acoustic-elastic interaction problem}\label{sec2}
\setcounter{equation}{0}

In this section, we formulate the mathematical formulation of the interaction problem for acoustic
and elastic waves with appropriate transmission conditions on the interface between the elastic body
and the acoustic medium. In addition, an exact time-domain transparent boundary condition (TBC)
is proposed to reformulate the unbounded interaction problem into an initial-boundary value problem
in a finite strip domain. We finally establish the well-posedness and stability of solutions to
the reduced problem.

We first introduce some basic notions to be used in this paper. Throughout, let $x=(\wid{x}^T,x_3)^T$,
where $\wid{x}=(x_1,x_2)^T\in\R^2$.
Denote by $\Om$ the bounded homogeneous, isotropic elastic body with a Lipschitz boundary $\G:=\pa\Om$
immersed in the unbounded domain $\Om_f^+$, where $\Om_f^+:=\{x\in\R^3:x_3>f(\wid{x})\}$ with the
boundary $\G_f:=\pa\Om_f^+=\{x\in\R^3:x_3=f(\wid{x})\}$ described by the smooth function $f\in C^2(\R^2)$.
We assume that $\G_f$ lies between the planes $x_3=f_-$ and $x_3=f_+$,
where $f_-:=\inf_{\wid{x}\in\R^2}f(\wid{x})$ and $f_+:=\sup_{\wid{x}\in\R^2}f(\wid{x})$ are two constants.
Suppose the elastic body $\Om$ is described by a constant mass density $\rho_e>0$.
Let $\Om^c=\Om_f^+\se\ov{\Om}$ be connected and occupied by a compressible fluid with constant density $\rho_0>0$.
Define $\G_{h}:=\{\bm{x}\in\R^3: x_3=h\}$, where the positive constant $h$ is assumed to be large enough
such that $\G_h$ is over $\Om$, and let $\Om_{h}=\{x\in\R^3: f<x_3<h\}\cap\Om^c$.
See Figure \ref{geometry} for the geometric setting of the problem.
Finally, define $\C_+:=\{s=s_1+is_2\in\C: s_1,s_2\in\R\;{\rm with\;}s_1>0\}$.

\begin{figure}[!htbp]
\setcounter{subfigure}{0}
  \centering
  \includegraphics[width=4.5in]{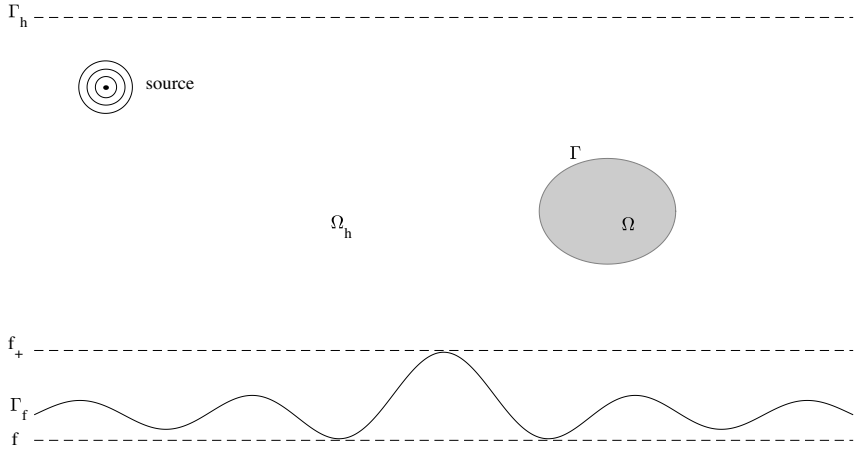}
 \caption{Geometric configuration of the interaction problem}\label{geometry}
\end{figure}

\textbf{Elastic domain}. In the elastic body $\Om$, the elastic displacement $\bm u=(u_1,u_2,u_3)^T$
is governed by the linear elastodynamic equation:
\be\label{2.1}
\rho_e\frac{\pa^{2}\bm u}{\pa t^{2}}-\Delta^*\bm u=\0\;\;\gin\;\;\Om\times(0,T)
\en
where $\Delta^*$ is the Lam\'{e} operator defined as
\ben
\Delta^*\bm u:=\mu\Delta\bm u+(\lambda+\mu)\na\dive\bm u=\dive\bm\sig(\bm u).
\enn
Here, $\bm\sig(\bm u)$ and $\bm\vep(\bm u)$ are called the stress and strain tensors, respectively,
given by
\ben
\bm\sig(\bm u)=(\la\dive\bm u)\mathbb{I}+2\mu\bm\vep(\bm u)\qquad\text{and}\qquad
\bm\vep(\bm u)=\frac{1}{2}(\na\bm u+(\na\bm u)^{T}),
\enn
where $\mathbb{I}$ is the identity matrix and $\na\bm u$ denotes the displacement gradient tensor:
\ben
\na\bm{u}=\left[
\begin{matrix}
 \partial_{x_1}u_1 & \partial_{x_2}u_1 & \partial_{x_3} u_1 \\
 \partial_{x_1}u_2 & \partial_{x_2}u_2 & \partial_{x_3} u_2 \\
 \partial_{x_1}u_3 & \partial_{x_2}u_3 & \partial_{x_3} u_3
\end{matrix}\right].
\enn
Further, Lam\'{e} constants $\lambda$ and $\mu$ are assumed to satisfy the condition
that $\mu\geq 0$ and $3\la+ 2\mu\geq 0$.

\textbf {Fluid domain}. In the unbounded fluid domain $\Om^c$, the pressure $p$ and the velocity $\bm v$
are governed by the conservation and dynamic equations in the time-domain:
\be\label{wave}
\frac{\pa p}{\pa t}=-c^2\rho_0\dive\bm v + g(x,t),\;\;\;
\frac{\pa\bm v}{\pa t}=-\rho_0^{-1}\na p\;\;\gin\;\;\Om^c\times(0,T).
\en
Eliminating the velocity $\bm v$ from (\ref{wave}), we get the wave equation for the pressure $p$:
\be\label{2.2}
\frac{\pa^{2}p}{\pa t^{2}}-c^2\Delta p=\pa_tg\quad \gin\;\; \Om^c\times(0,T),
\en
where $c$ is the sound speed and $g$ is the acoustic source which is assumed to be supported in $\Om_h$
and $g|_{t=0}=0$. We assume that $p$ satisfies the Dirichlet boundary condition on $\G_f$:
\be\label{dirichlet}
p=0\;\;\;\on\;\;\;\G_f.
\en
In addition, we impose the Upward Angular Spectrum Representation (UASR) condition on $p$
proposed in \cite{CM2005}:
\be\label{outgoing}
p(x,t)=\mathscr{L}^{-1}\Big\{\frac{1}{(2\pi)^2}\int_{\R^2}exp(i[(x_3-h)i\sqrt{s^2/c^2+|\xi|^2}
  +\wid{x}\cdot\xi])\hat{\ch{p}}(\xi,h)d\xi\Big\}
\en
for $x\in\Om_h^+:=\{x\in\R^3:x_3>h\}$, where $\mathscr{L}^{-1}$ is the inverse Laplace transform,
$\hat{\ch{p}}(\xi,h)=\mathscr{F}\ch{p}|_{\G_h}$ denotes the Fourier transform of $\ch{p}=\mathscr{L}(p)$
(the Laplace transform of $p$ with respect to $t$) restricted on $\G_h$ (the definition and relationship
of the Fourier and Laplace transforms are given in Appendix A), $\beta(\xi)=\sqrt{s^2/c^2+|\xi|^2}$
with $\Rt[\beta(\xi)]>0$ and $s\in\C_+$.

Further, we have the following transmission conditions on the interface between the elastic and fluid media
(see \cite{Hsiao2015}):

(i) The kinematic interface condition
\be\label{2.5}
\pa_n p=-\rho_0\n\cdot\pa_t^2\bm u\;\;\;\mbox{on}\;\;\G,
\en

(ii) The dynamic interface condition
\be\label{2.6}
-p\n=\bm{\sig}(\bm{u})\n\;\;\;\mbox{on}\;\;\G,
\en
where $\n$ is the unit normal on $\G$ directed into the exterior of the domain $\Om$.

To be more precise, the acoustic-elastic interaction problem we consider is that a time-dependent acoustic
wave propagates in a fluid domain above a rough surface in which a bounded elastic body is immersed.
The problem is to determine the scattered pressure in the fluid domain and the displacement field in the
elastic domain at any time. The time-dependent scattering problem can be now modelled by combining (\ref{2.1})
for the elastic displacement field $\bm{u}$ and (\ref{2.2}) for the pressure field $p$ together with
the transmission conditions (\ref{2.5})-(\ref{2.6}), the Dirichlet boundary conditions (\ref{dirichlet})
on $\G_f$ as well as the homogeneous initial conditions
\be\label{2.7}
\bm{u}(x,0)=\pa_t\bm{u}(x,0)=\0,\;\;x\in\Om,\quad\;\;p(x,0)=\pa_tp(x,0)=0,\;\; x\in\Om^c,
\en
which can be formulated mathematically as follows:
\be\label{2.8}
\begin{cases}
 \ds\rho_e\frac{\pa^{2}\bm u}{\pa t^{2}}-\Delta^*\bm u=\0& \gin\;\;\Om\times(0,T),\\
 \ds\frac{\pa^{2}p}{\pa t^{2}}-c^2\Delta p=\pa_t g & \gin\;\;\Om^c\times(0,T),\\
 \ds\textbf u(x,0)=\pa_t\bm{u}(x,0)=\textbf{0} & \gin\;\;\Om,\\
 \ds p(x,0)=\pa_tp(x,0)=0 & \gin\;\;\Om^c,\\
 \ds\pa_n p=-\rho_0\n\cdot\pa_t^2\bm u & \on\;\;\G\times(0,T), \\
 \ds-p\n=\bm{\sig}(\bm{u})\n & \on\;\;\G\times(0,T),\\
 \ds p=0 & \on\;\;\G_f\times(0,T),\\
  p\;\;\mbox{satisfies the UASR condition (\ref{outgoing}).}&
\end{cases}
\en

To study the well-posedness of the scattering problem (\ref{2.8}), we reformulate it into
a transmission problem in the strip domain $\Om_h\cup\Om$ by using the transparent boundary
condition (TBC) on the plane $\G_h$ proposed in \cite{GLZ2017}:
\be\label{2.9}
\pa_n p=\mathscr{T}[p]\quad \on\;\;\G_h\times(0,T).
\en
Then (\ref{2.8}) can be equivalently reduced to the transmission problem (TP) in $\Om_h\cup\Om$:
\be\label{reduced}
\begin{cases}
 \ds\rho_e\frac{\pa^{2}\bm u}{\pa t^{2}}-\Delta^*\bm u=\0& \gin\;\;\Om\times(0,T),\\
 \ds\frac{\pa^{2}p}{\pa t^{2}}-c^2\Delta p=\pa_t g & \gin\;\;\Om_h\times(0,T),\\
 \ds\textbf u(x,0)=\pa_t\bm{u}(x,0)=\textbf{0} & \gin\;\;\Om,\\
 \ds p(x,0)=\pa_tp(x,0)=0 & \gin\;\;\Om_h,\\
 \ds\pa_n p=-\rho_0\n\cdot\pa_t^2\bm u & \on\;\;\G\times(0,T), \\
 \ds-p\n=\bm{\sig}(\bm{u})\n & \on\;\;\G\times(0,T),\\
 \ds p=0 & \on\;\;\G_f\times(0,T),\\
 \ds\pa_n p=\mathscr{T}[p] & \on\;\;\G_h\times(0,T).
\end{cases}
\en

In the remaining part of this section, we establish the well-posedness and stability of the reduced
problem (\ref{reduced}) by using the Laplace transform.
The proof is similar to that used in \cite{GLZ2017}, and so we only present the main results
without detailed proofs. To this end, we take the Laplace transform of $p(x,t)$ and $\bm u(x,t)$,
respectively, in (\ref{reduced}) with respect to $t$ and write $\ch{p}(x,s)=\mathscr{L}(p)(x,s)$
$\ch{\bm u}(x,s)=\mathscr{L}(\bm u)(x,s)$. Then (\ref{reduced}) can be reduced to the problem in
$s$-domain:
\be\label{2.10}
\begin{cases}
 \ds\Delta^*\ch{\bm u}-\rho_es^2\ch{\bm u}=\0 & \gin\;\;\Om,\\
 \ds\Delta\ch{p}-\frac{s^{2}}{c^{2}}\ch{p}=-s\ch{g}/c^2 & \gin\;\;\Om_h,\\
 \ds\pa_n\ch{p}=-\rho_0s^2\n\cdot\ch{\bm u} & \on\;\;\G,\\
 \ds-\ch{p}\n=\bm{\sig}(\ch{\bm{u}})\n & \on\;\;\G,\\
 \ds\ch{p}=0 & \on\;\;\G_f,\\
 \ds\pa_n \ch{p}=\mathscr{B}[\ch{p}] & \on\;\;\G_h,
\end{cases}
\en
where $s\in\C_+$ and $\mathscr{B}$ is the Dirichlet-to-Neumann (DtN) operator in $s$-domain satisfying
$\mathscr{T}=\mathscr{L}^{-1}\circ\mathscr{B}\circ\mathscr{L}$.

For any function $\om(\wid{x},h)$ defined on $\G_h$, the DtN operator $\mathscr{B}$ is defined by
\be\label{2.18}
(\mathscr{B}\om)(\wid{x},h)=-\int_{\R^2}\beta(\xi)\hat{\om}(\xi,h)e^{i\xi\cdot\wid{x}}d\xi.
\en
Then the following lemma was proved in \cite{GLZ2017} (see \cite[Lemmas 2.4 and 2.5]{GLZ2017}),
where, for $s\in\R$ the space $H^s(\G_h)$ denotes the standard Sobolev space on $\G_h$ with
its norm being defined via the Fourier transform as
\be\label{4.20}
\|\phi\|^2_{H^s(\G_h)}=\int_{\R^2}(1+|\xi|^2)^s|\hat{\phi}(\xi,h)|^2d\xi,\;\;\;
\phi\in H^{s}(\G_h),\;\;s\in\R.
\en

\begin{lemma}\label{lem2.1}
$(i)$ For $s=s_1+s_2\in\C_+$ with $s_1\ge\sig_0>0$ the DtN operator $\mathscr{B}(s)$ is bounded from
$H^{1/2}(\G_{h})$ to $H^{-1/2}(\G_{h})$, that is,
\ben
\|\mathscr{B}(s)w\|_{H^{-1/2}(\G_{h})}\le C(\sig_0)|s|\|w\|_{H^{1/2}(\G_{h})}\;\;\;
\forall w\in H^{1/2}(\G_{h}),
\enn
where $C(\sig_0)$ is a constant depending only on $c$ and $\sig_0$.

$(ii)$ For any $\om\in H^{1/2}(\G_{h})$ we have
\ben
-\Rt\langle s^{-1}\mathscr{B}\om,\om\rangle_{\G_{h}}\geq 0,\;\;\;s\in\C_+,
\enn
where $\langle\cdot,\cdot\rangle_{\G_h}$ denotes the dual product on $\G_h$ between $H^{1/2}(\G_{h})$
and $H^{-1/2}(\G_{h})$.
\end{lemma}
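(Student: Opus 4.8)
The plan is to establish both statements directly from the explicit formula \eqref{2.18} for $\mathscr{B}$ using the Fourier characterization of the Sobolev norms in \eqref{4.20}, together with careful bounds on the symbol $\beta(\xi)=\sqrt{s^2/c^2+|\xi|^2}$ for $s\in\C_+$. The only analytic input needed is a two-sided estimate on $|\beta(\xi)|$ and a sign/lower bound on $\Rt[s^{-1}\beta(\xi)]$, both uniform in $\xi\in\R^2$.

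For part $(i)$, by Parseval/Plancherel and \eqref{2.18} we have
\ben
\|\mathscr{B}(s)w\|_{H^{-1/2}(\G_h)}^2=\int_{\R^2}(1+|\xi|^2)^{-1/2}|\beta(\xi)|^2|\hat w(\xi,h)|^2\,d\xi,
\enn
so it suffices to show $|\beta(\xi)|^2\le C(\sig_0)^2|s|^2(1+|\xi|^2)$ for all $\xi$. I would prove this by splitting into the regimes $|\xi|^2\le |s|^2/c^2$ and $|\xi|^2> |s|^2/c^2$: in the first, $|\beta(\xi)|^2\le |s|^2/c^2+|\xi|^2\le 2|s|^2/c^2\le (2/c^2)|s|^2(1+|\xi|^2)$; in the second, $|\beta(\xi)|^2\le |s|^2/c^2+|\xi|^2\le(1/c^2+1)|\xi|^2$, and here I need to absorb $|\xi|^2$ into $|s|^2(1+|\xi|^2)$, which is not automatic for large $|\xi|$ unless $|s|$ is bounded below — this is exactly where the hypothesis $s_1\ge\sig_0>0$ enters, giving $|s|\ge\sig_0$ and hence $|\xi|^2\le(|\xi|^2/\sig_0^2)\,|s|^2\cdot$(dimensional constant). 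Combining the two regimes yields the claim with $C(\sig_0)$ depending only on $c$ and $\sig_0$. The main obstacle here is just bookkeeping the constant's dependence and choosing the branch of the square root consistently so that $\Rt[\beta(\xi)]>0$; no deep idea is required.

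For part $(ii)$, again by \eqref{2.18} and Parseval,
\ben
-\Rt\langle s^{-1}\mathscr{B}\om,\om\rangle_{\G_h}=\Rt\int_{\R^2}s^{-1}\beta(\xi)|\hat\om(\xi,h)|^2\,d\xi
=\int_{\R^2}\Rt\!\big[s^{-1}\beta(\xi)\big]\,|\hat\om(\xi,h)|^2\,d\xi,
\enn
so the assertion reduces to the pointwise inequality $\Rt[s^{-1}\beta(\xi)]\ge 0$ for every $\xi\in\R^2$ and every $s\in\C_+$. Writing $s^{-1}\beta(\xi)=s^{-1}\sqrt{s^2/c^2+|\xi|^2}=\sqrt{1/c^2+|\xi|^2/s^2}$ (with the branch making the real part nonnegative), I would argue that the map $z\mapsto\sqrt{1/c^2+z}$ sends the image of $\{|\xi|^2/s^2:s\in\C_+\}$ into the closed right half-plane: since $s\in\C_+$ means $s^2$ avoids the negative real axis in a controlled way, $|\xi|^2/s^2$ lies in $\C\setminus(-\infty,-1/c^2]$, so $1/c^2+|\xi|^2/s^2\in\C\setminus(-\infty,0]$ and its principal square root has strictly positive real part. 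Hence the integrand is nonnegative and the result follows. The delicate point — the only real subtlety in the whole lemma — is verifying that the branch of $\beta$ fixed by the convention $\Rt[\beta]>0$ is consistent with the branch of $s^{-1}\beta$ needed here, i.e. that $\Rt[s^{-1}\beta(\xi)]\ge 0$ genuinely holds rather than $\le 0$; this is checked by evaluating at $\xi=0$ (where $s^{-1}\beta(0)=1/c>0$) and invoking continuity together with the fact that $\Rt[s^{-1}\beta(\xi)]$ never vanishes on $\C_+\times\R^2$. Since the excerpt states these results are quoted from \cite{GLZ2017}, I would present the argument at this level of detail and refer there for the remaining verifications.
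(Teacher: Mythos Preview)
Your proposal is sound, but note that the paper does not actually prove this lemma: it is quoted from \cite{GLZ2017} (Lemmas~2.4 and~2.5 there), so there is no in-paper proof to compare against. What you sketch is the standard Fourier-multiplier argument, and it is correct; the minor arithmetic slip in the constant $(1/c^2+1)$ in your second regime is harmless since, as you say, the only point is that $|\beta(\xi)|^2=O(|\xi|^2)$ there and $|s|\ge\sig_0$ lets you absorb it.

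For part~(ii) your continuity/branch argument is valid, but there is a more direct computation that avoids the topological step and is likely what \cite{GLZ2017} does: writing $\beta=\beta_r+i\beta_i$ with $\beta_r>0$, the identity $\beta^2=s^2/c^2+|\xi|^2$ gives $2\beta_r\beta_i=\I(s^2)/c^2=2s_1s_2/c^2$, hence $\beta_i=s_1s_2/(c^2\beta_r)$, and then
\[
\Rt\big[s^{-1}\beta(\xi)\big]=\frac{s_1\beta_r+s_2\beta_i}{|s|^2}
=\frac{s_1}{|s|^2}\left(\beta_r+\frac{s_2^2}{c^2\beta_r}\right)>0.
\]
This gives the sign immediately (and strictly), with the role of $s_1>0$ transparent. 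Your argument has the merit of being conceptually robust and would transfer to other symbols, at the cost of the branch-consistency check you correctly flag; the algebraic route is shorter here. Either way, referring to \cite{GLZ2017} as you propose is exactly what the paper does.
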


To study the $s$-domain problem (\ref{2.10}) we introduce the Hilbert space
$H:=H_{\G_f}^1(\Om_h)\times H^1(\Om)^3$, where $H_{\G_f}^1(\Om_h):=\{u\in H^1(\Om_h):u=0\;\on\;\G_f\}$.
The norm of the product space $H$ is defined by
\be\label{3.3a}
\|(q,\bm v)\|_H:=\left[\|q\|^2_{H^1(\Om_h)}+\|\bm v\|^2_{H^1(\Om)^3}\right]^{1/2}\;\;\;
\text{for}\;\;(q,\bm{v})\in H,
\en
where $\|\cdot\|_{H^1(\Om_h)}$ denotes the usual $H^1$-norm and $\|\cdot\|_{H^1(\Om)^3}$ is defined by
\ben
\|\bm v\|_{H^1(\Om)^3}:=\left(\|\bm v\|^2_{L^2(\Om)^3}+\|\na\bm v\|^2_{F(\Om)}\right)^{1/2}
\enn
with the Frobenius norm
\ben
\|\na\bm{v}\|_{F(\Om)}:=\left(\sum_{j=1}^3\int_{\Om}|\na v_j|^2dx\right)^{1/2}.
\enn
It is easy to verify that
\ben
\|\na\bm{v}\|^2_{F(\Om)}+\|\na\cdot\bm{v}\|^2_{L^2(\Om)}\lesssim \|\bm v\|^2_{H^1(\Om)^3}.
\enn
Hereafter, the expression $a\lesssim b$ or $a\gtrsim b$ means $a\leq Cb$ or $a\geq Cb$, respectively,
for a generic positive constant $C$ which does not depend on any function and important parameters
in our model.

The variational formulation of (\ref{2.10}) can be obtained as follows: Find a solution
$(\ch{p},\ch{\bm{u}})\in H:=H_{\G_f}^1(\Om_h)\times H^1(\Om)^3$ such that
\be\label{3.2}
a\left((\ch{p},\ch{\bm{u}}),(q,\bm{v})\right)=\int_{\Om_h}\frac{\ch{g}}{c^2}\cdot\ov{q}dx,\;\;\quad
\forall\;\;(q,\bm{v})\in H,
\en
where the sesquilinear form $a(\cdot,\cdot)$ is defined as
\be\label{3.3}
a\left((\ch{p},\ch{\bm{u}}),(q,\bm{v})\right)&=&\int_{\Om_h}\left(s^{-1}\na\ch{p}\cdot\na\ov{q}
+\frac{s}{c^2}\ch{p}\cdot\ov{q}\right)dx\no\\
&+&\int_\Om\left[\rho_0\ov{s}\left(\la(\na\cdot\ch{\bm{u}})(\na\cdot\ov{\bm{v}})
 +2\mu\bm\vep(\ch{\bm{u}}):\bm\vep(\ov{\bm{v}})\right)
 +\rho_0\rho_e|s|^2s\ch{\bm{u}}\cdot\ov{\bm{v}}\right]dx\no\\
&-&\int_{\G_h}s^{-1}\mathscr{B}[\ch{p}]\cdot\ov{q}d\g
-\rho_0\int_\G s\n\cdot\ch{\bm{u}}\ov{q}d\g+\rho_0\int_\G\ov{s}\ch{p}\n\cdot\ov{\bm{v}}d\g
\en
with $A:B=tr(AB^{T})$ denoting the Frobenius inner product of the square matrices $A$ and $B$.

Letting $(q,\bm{v})=(\ch{p},\ch{\bm{u}})$ in (\ref{3.3}) and setting $\bm\om=(\ch{p},\ch{\bm u})$, applying 
the famous Korn's inequality \cite[Chapter 10]{Mclean2000}
\ben
\|\bm\vep(\bm v)\|^2_{F(\Om)}+\|\bm v\|^2_{L^2(\Om)^3}\geq C_\Om\|\bm v\|^2_{H^1(\Om)^3},\;\;\forall\;\bm v\in H^1(\Om)^3
\enn
we obtain that
\be\label{coercivity}
\Rt[a(\bm\om,\bm\om)]&\ge&\frac{s_1}{|s|^2}\left[\|\na\ch{p}\|_{L^2(\Om_h)^3}^2
+\|\frac{s}{c}\ch{p}\|_{L^2(\Om_h)}^2\right]+\rho_0s_1\left[2\mu\|\bm\vep(\ch{\bm u})\|^2_{F(\Om)}
+\rho_e\|s\ch{\bm u}\|^2_{L^2(\Om)^3}\right]\no\\
&\ge&\frac{s_1}{|s|^2}C_1\|\ch{p}\|_{H^1(\Om_h)}^2+\rho_0s_1C_2\|\ch{\bm u}\|_{H^1(\Om)^3}^2\no\\
&\ge& C(\|\ch{p}\|_{H^1(\Om_h)}^2+\|\ch{\bm u}\|_{H^1(\Om)^3}^2)\no\\
&=&C\|\bm\om\|_{H}^2,
\en
where use has been made of Lemma \ref{lem2.1} (ii) to get the first inequality,
$C=\min\{{s_1C_1}/{|s|^2},\rho_0s_1C_2\}$, $C_1=\min\{1,{|s|^2}/{c^2}\}$
and $C_2=C_\Om\min\{2\mu,\rho_e\min\{1,|s|^2\}\}$. This means that the sesquilinear form $a(\cdot,\cdot)$ is
uniformly coercive in $H$.
By Lemma \ref{lem2.1} (i), the trace theorem (see \cite[Lemma 2.2]{GLZ2017})
and the Lax-Milgram theorem, we can obtain the following result on the well-posedness of
the $s$-domain problem (\ref{2.10}) or equivalently its variational formulation (\ref{3.3}).

\begin{lemma}\label{thm3.1}
For each $s\in\C_+$, the variational problem $(\ref{3.2})$ has a unique solution
$(\ch{p},\ch{\bm{u}})\in H$ satisfying that
\be\label{3.4}
&&\|\na\ch{p}\|_{L^2(\Om_h)^3}+\|s\ch{p}\|_{L^2(\Om_h)}\lesssim\frac{|s|}{s_1}\|\ch{g}\|_{L^2(\Om_h)},\\
&&\|\na\ch{\bm{u}}\|_{F(\Om)}+\|\na\cdot\ch{\bm{u}}\|_{L^2(\Om)}+\|s\ch{\bm{u}}\|_{{L^2(\Om)}^3}
\lesssim\frac{1}{s_1\min\{1,s_1\}}\|\ch{g}\|_{L^2(\Om_h)}.\label{3.5}
\en
\end{lemma}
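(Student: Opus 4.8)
The plan is to derive the two estimates directly from the uniform coercivity \eqref{coercivity} and the Lax--Milgram theorem, tracking the explicit dependence on $s_1=\Rt(s)$ and $|s|$ through the constants. First I would verify that the sesquilinear form $a(\cdot,\cdot)$ is bounded on $H\times H$: the volume terms are controlled by $\|\cdot\|_H$ with a constant multiple of $\max\{s^{-1},s,|s|^2 s\}$ (which is harmless since we only seek existence for fixed $s$), the interface terms on $\G$ are handled by the trace theorem \cite[Lemma 2.2]{GLZ2017}, and the $\G_h$ term is handled by Lemma \ref{lem2.1}(i), which gives boundedness of $\mathscr{B}(s)$ from $H^{1/2}(\G_h)$ to $H^{-1/2}(\G_h)$ with constant $C(\sig_0)|s|$. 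Combined with the coercivity bound $\Rt[a(\bm\om,\bm\om)]\ge C\|\bm\om\|_H^2$ already established, the Lax--Milgram theorem yields a unique solution $(\ch p,\ch{\bm u})\in H$.

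Next I would extract the quantitative estimates. Taking $(q,\bm v)=(\ch p,\ch{\bm u})$ in \eqref{3.2} and using the right-hand side bound $|\int_{\Om_h}(\ch g/c^2)\ov{\ch p}\,dx|\le c^{-2}\|\ch g\|_{L^2(\Om_h)}\|\ch p\|_{L^2(\Om_h)}$, the chain of inequalities in \eqref{coercivity} gives
\begin{equation*}
\frac{s_1}{|s|^2}\Big(\|\na\ch p\|_{L^2(\Om_h)^3}^2+\big\|\tfrac{s}{c}\ch p\big\|_{L^2(\Om_h)}^2\Big)
+\rho_0 s_1\Big(2\mu\|\bm\vep(\ch{\bm u})\|_{F(\Om)}^2+\rho_e\|s\ch{\bm u}\|_{L^2(\Om)^3}^2\Big)
\lesssim \|\ch g\|_{L^2(\Om_h)}\,\|\ch p\|_{L^2(\Om_h)}.
\end{equation*}
For \eqref{3.4} I would bound the right-hand side via Young's inequality, absorbing $\|\ch p\|_{L^2(\Om_h)}$ (equivalently $|s|^{-1}\|s\ch p\|_{L^2(\Om_h)}$) into the left using the coefficient $s_1/c^2$ of $\|(s/c)\ch p\|^2$: writing $\|\ch g\|\,\|\ch p\|\le \frac{c^2}{2s_1}\|\ch g\|^2+\frac{s_1}{2c^2}\|\ch p\|^2$ and noting $\frac{s_1}{2c^2}\|\ch p\|^2=\frac{s_1}{2|s|^2}\|\tfrac{s}{c}\ch p\|^2$, which is half the corresponding term on the left. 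This leaves $\frac{s_1}{|s|^2}\|\na\ch p\|^2+\frac{s_1}{2|s|^2}\|\tfrac{s}{c}\ch p\|^2\lesssim \frac{1}{s_1}\|\ch g\|^2$, hence $\|\na\ch p\|_{L^2(\Om_h)^3}^2+\|s\ch p\|_{L^2(\Om_h)}^2\lesssim \frac{|s|^2}{s_1^2}\|\ch g\|_{L^2(\Om_h)}^2$, which upon taking square roots is \eqref{3.4}.

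For \eqref{3.5} I would keep the elastic terms: from the same inequality, $\rho_0 s_1\big(2\mu\|\bm\vep(\ch{\bm u})\|_F^2+\rho_e\|s\ch{\bm u}\|_{L^2}^2\big)\lesssim \|\ch g\|\,\|\ch p\|$, and then substitute the bound on $\|\ch p\|_{L^2(\Om_h)}\le |s|^{-1}\|s\ch p\|_{L^2(\Om_h)}\lesssim s_1^{-1}\|\ch g\|_{L^2(\Om_h)}$ from \eqref{3.4}, giving $s_1\big(\|\bm\vep(\ch{\bm u})\|_F^2+\|s\ch{\bm u}\|_{L^2}^2\big)\lesssim s_1^{-1}\|\ch g\|^2$. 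Invoking Korn's inequality to pass from $\|\bm\vep(\ch{\bm u})\|_F^2+\|\ch{\bm u}\|_{L^2}^2$ to $\|\ch{\bm u}\|_{H^1(\Om)^3}^2$ — and here the factor $\min\{1,s_1\}$ appears because $\|\ch{\bm u}\|_{L^2}^2=|s|^{-2}\|s\ch{\bm u}\|_{L^2}^2$ must itself be controlled, costing a factor $\min\{1,|s|^2\}$ in the constant $C_2$, which combines with the outer $s_1^{-1}$ to produce $\frac{1}{s_1\min\{1,s_1\}}$ — then yields $\|\na\ch{\bm u}\|_{F(\Om)}+\|\na\cdot\ch{\bm u}\|_{L^2(\Om)}+\|s\ch{\bm u}\|_{L^2(\Om)^3}\lesssim \frac{1}{s_1\min\{1,s_1\}}\|\ch g\|_{L^2(\Om_h)}$. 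I expect the main obstacle to be bookkeeping: making sure each use of Young's inequality and of Korn's inequality introduces exactly the claimed powers of $s_1$ and $|s|$ (and no hidden dependence of the generic constant on $s$), since these explicit rates are precisely what later permits the inverse Laplace transform and the stability estimates in the time domain.
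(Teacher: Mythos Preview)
Your proposal is correct and follows essentially the same route as the paper: boundedness of $a(\cdot,\cdot)$ via Lemma~\ref{lem2.1}(i) and the trace theorem, coercivity from \eqref{coercivity} (using Lemma~\ref{lem2.1}(ii) and Korn's inequality), then Lax--Milgram for existence/uniqueness, with the quantitative bounds \eqref{3.4}--\eqref{3.5} extracted by testing against the solution and tracking the $s_1$- and $|s|$-dependence through Young's inequality and Korn. The paper states this outline only briefly and does not spell out the derivation of \eqref{3.4}--\eqref{3.5}; your write-up fills in exactly those details, and your accounting for the factor $\min\{1,s_1\}$ via $\|\ch{\bm u}\|_{L^2}^2=|s|^{-2}\|s\ch{\bm u}\|_{L^2}^2$ together with $|s|\ge s_1$ is the intended mechanism.
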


To prove the well-posedness of the reduced problem (\ref{reduced}),
and establish the convergence of the PML method,
we need the following assumptions on the inhomogeneous term $g$:
\be\label{assumption}
g\in H^3(0,T;L^2(\Om_h)),\;\;g|_{t=0}=\pa_tg|_{t=0}=\pa_t^2g|_{t=0}=0.
\en
Further, we always assume that $g$ can be extended to $\infty$ with respect to $t$ such that
\be\label{assumption1}
g\in H^3(0,\infty;L^2(\Om_h)),\;\;\|g\|_{H^3(0,\infty;L^2(\Om_h))}\lesssim\|g\|_{H^3(0,T;L^2(\Om_h))}.
\en

By using Lemma \ref{thm3.1} and a similar argument as in the proof of Theorem 3.2 in \cite{GLZ2017},
the well-posedness and stability of the reduced problem (\ref{reduced}) can be obtained.

\begin{theorem}\label{thm3.3}
The reduced problem $(\ref{reduced})$ has a unique solution $(p,\bm{u})$ such that
\ben
&& p\in L^2\left(0,T;H_{\G_f}^1(\Om_h)\right)\cap H^1\left(0,T;L^2(\Om_h)\right),\\
&&\bm{u}\in L^2\left(0,T;H^1(\Om)^3\right)\cap H^1\left(0,T;L^2(\Om)^3\right)
\enn
with the estimates
\be\label{3.1a}
&&\max\limits_{t\in[0,T]}\left[\|\pa_t p\|_{L^2(\Om_h)}+\|\na p\|_{L^2(\Om_h)^3}\right]
\lesssim\|\pa_t g\|_{L^1(0,T;L^2(\Om_h))},\\ \label{3.1b}
&&\max\limits_{t\in[0,T]}\left[\|\pa_t\bm{u}\|_{L^2(\Om)^3}+\|\na\cdot\bm{u}\|_{L^2(\Om)}
+\|\na\bm{u}\|_{F(\Om)}\right]\lesssim\|\pa_t g\|_{L^1(0,T;L^2(\Om_h))}.
\en
\end{theorem}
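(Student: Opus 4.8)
The plan is to pass from the uniform-in-$s$ bounds of Lemma~\ref{thm3.1} back to the time domain via the inverse Laplace transform, using the standard Plancherel-Paley-Wiener machinery for vector-valued functions on the line $\Rt(s)=s_1$. First I would rewrite the estimates (\ref{3.4})--(\ref{3.5}) of Lemma~\ref{thm3.1} in the slightly sharpened form obtained by testing (\ref{3.2}) with $(q,\bm v)=(s\ch p,s\ch{\bm u})$ rather than $(\ch p,\ch{\bm u})$; this produces, after taking real parts and invoking Lemma~\ref{lem2.1}(ii) and Korn's inequality exactly as in (\ref{coercivity}), a bound of the schematic form
\be\label{aux-key}
s_1\Big(\|s\ch p\|_{L^2(\Om_h)}^2+\|\na\ch p\|_{L^2(\Om_h)^3}^2
+\|s\ch{\bm u}\|_{L^2(\Om)^3}^2+\|\na\ch{\bm u}\|_{F(\Om)}^2+\|\na\cdot\ch{\bm u}\|_{L^2(\Om)}^2\Big)
\lesssim \|s\ch g\|_{L^2(\Om_h)}\,\|\ch p\|_{L^2(\Om_h)},\no
\en
and then absorbing $\|s\ch p\|_{L^2(\Om_h)}$ on the right by Young's inequality, so that the whole left-hand side is controlled by $s_1^{-2}\|s\ch g\|_{L^2(\Om_h)}^2$, uniformly for $s_1$ bounded below.

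Next I would integrate this inequality over the vertical line $s=s_1+i s_2$, $s_2\in\R$, for a fixed $s_1>0$. By the Plancherel theorem for the Laplace transform (the relation between $\mathscr L$ and $\mathscr F$ recalled in Appendix~A), $\int_\R \|s\ch g(s_1+is_2)\|_{L^2(\Om_h)}^2\,ds_2$ is comparable to $\|e^{-s_1 t}\pa_t g\|_{L^2(0,\infty;L^2(\Om_h))}^2$, which under the extension hypotheses (\ref{assumption})--(\ref{assumption1}) is finite and bounded by $\|\pa_t g\|_{L^1(0,T;L^2(\Om_h))}$ up to constants depending on $s_1$ and $T$; simultaneously the line integrals of $\|s\ch p\|^2$, $\|\na\ch p\|^2$, $\|s\ch{\bm u}\|^2$, $\|\na\ch{\bm u}\|^2$ are comparable to $\|e^{-s_1 t}\pa_t p\|^2_{L^2(0,\infty;L^2(\Om_h))}$, $\|e^{-s_1 t}\na p\|^2$, etc. This yields $p\in L^2(0,\infty;H^1_{\G_f}(\Om_h))\cap H^1(0,\infty;L^2(\Om_h))$ and $\bm u\in L^2(0,\infty;H^1(\Om)^3)\cap H^1(0,\infty;L^2(\Om)^3)$ together with the corresponding weighted energy bounds; letting $s_1\downarrow 0$ (or simply fixing $s_1=1/T$) removes the weight over the finite interval $[0,T]$. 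Uniqueness follows from the injectivity of $\mathscr L$ together with the uniqueness part of Lemma~\ref{thm3.1} applied to the difference of two solutions with $g=0$.

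To upgrade the $L^2$-in-time bounds to the pointwise-in-time estimates (\ref{3.1a})--(\ref{3.1b}), I would differentiate (\ref{reduced}) in $t$ (legitimate because $\pa_t^2 g, \pa_t^3 g$ exist by (\ref{assumption}) and the initial data vanish through second order), apply the already-established $L^2$-in-time theory to the differentiated system to get $\pa_t p\in L^2(0,T;H^1)\cap H^1(0,T;L^2)$ and likewise for $\pa_t\bm u$, and then use the Sobolev embedding $H^1(0,T)\hookrightarrow C([0,T])$ in the time variable, with constants controlled by $\|\pa_t^2 g\|_{L^2}$ and hence by $\|\pa_t g\|_{L^1(0,T;L^2(\Om_h))}$ after a further integration-by-parts/H\"older argument in $t$ exploiting the vanishing initial conditions. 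The main obstacle is the bookkeeping of this last step: one must verify that the energy identity obtained by testing the time-domain system with $(\pa_t p,\pa_t\bm u)$ closes cleanly despite the nonlocal boundary term $\mathscr T[p]$ on $\G_h$ and the coupling terms on $\G$, which is exactly where Lemma~\ref{lem2.1}(ii) (the sign of $-\Rt\langle s^{-1}\mathscr B\om,\om\rangle$) and the symmetry of the interface conditions (\ref{2.5})--(\ref{2.6}) are used to discard the boundary contributions; modulo this, everything reduces to the argument of \cite[Theorem~3.2]{GLZ2017}, which is why I would present it by reference rather than in full detail.
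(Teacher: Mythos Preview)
The paper itself does not give a proof of this theorem beyond the single sentence ``By using Lemma~\ref{thm3.1} and a similar argument as in the proof of Theorem~3.2 in \cite{GLZ2017}\ldots'', so your outline already exceeds the level of detail in the text. Your existence and uniqueness argument --- a priori bounds in the $s$-domain from Lemma~\ref{thm3.1}, then Parseval on the line $\Rt(s)=s_1$ to recover weighted $L^2$-in-time control of $(p,\bm u)$ in $H$, and injectivity of $\mathscr L$ for uniqueness --- is the standard route and is what the reference to \cite{GLZ2017} is pointing at.

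There is, however, a genuine gap in your plan for the pointwise-in-time estimates (\ref{3.1a})--(\ref{3.1b}). The detour through differentiating the system in $t$, applying the $L^2$-in-time theory to the differentiated problem, and then invoking $H^1(0,T)\hookrightarrow C([0,T])$ would produce a right-hand side of the form $\|\pa_t^2 g\|_{L^2(0,T;L^2(\Om_h))}$ (or worse), not the $L^1$-in-time norm $\|\pa_t g\|_{L^1(0,T;L^2(\Om_h))}$ that is actually claimed. No amount of H\"older or integration by parts will convert an $L^2$-in-time bound on a higher derivative into an $L^1$-in-time bound on a lower one. The correct mechanism is the one you mention only as an ``obstacle'' at the end: test the time-domain system directly with $(\pa_t p,\pa_t\bm u)$, obtain the energy identity
\[
\frac{d}{dt}\,E(t)\;+\;\text{(nonnegative boundary/coupling terms)}\;=\;\frac{1}{c^2}\int_{\Om_h}\pa_t g\,\pa_t p\,dx,
\]
where $E(t)$ controls $\|\pa_t p\|_{L^2}^2+\|\na p\|_{L^2}^2+\|\pa_t\bm u\|_{L^2}^2+\|\bm\vep(\bm u)\|_F^2$, integrate from $0$ to $t$, and bound the right side by $\sup_{[0,t]}\|\pa_t p\|_{L^2}\cdot\|\pa_t g\|_{L^1(0,T;L^2)}$ before absorbing the sup. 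The nonnegativity of the boundary contribution on $\G_h$ is exactly Lemma~\ref{lem2.1}(ii), and the two interface terms on $\G$ cancel by the transmission conditions (\ref{2.5})--(\ref{2.6}); this is the content of \cite[Theorem~3.2]{GLZ2017}. So your final paragraph has the right idea, but it is the main argument for (\ref{3.1a})--(\ref{3.1b}), not a side issue --- drop the Sobolev-embedding step entirely.
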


\section{The time-domain PML problem}\label{sec3}
\setcounter{equation}{0}

In this section, we shall derive the time-domain PML formulation for the acoustic-elastic interaction
problem (\ref{2.8}). The well-posedness and stability of the PML problem can be established based on
a similar method as used in the proof of Theorem 3.2 in \cite{GLZ2017}.
Finally, we prove the exponential convergence of the time-domain PML method via constructing a
special PML layer in the $x_3$-direction, based on the real coordinate stretching technique.

\subsection{The PML problem and its well-posedness}

Let us first introduce the PML geometry which is presented in Figure \ref{PML}.
Let $\Om_{h+L}=\{\bm{x}\in\R^3: f<x_3<h+L\}\cap\Om^c$ denote the truncated PML domain
and let $\Om_{h}^L=\{\bm{x}\in\R^3: h<x_3<h+L\}$ denote the PML layer with the exterior
boundary $\G_{h+L}:=\{\bm{x}\in\R^3: x_3=h+L\}$, where $L>0$ is the thickness of the PML layer.
Now, let $s_1> 0$ be an arbitrarily fixed parameter and let us introduce the PML medium property $\sig=\sig(x_3)$:
\be\label{4.1}
\sig(x_3)=\begin{cases}
  \ds  1 &\quad \text{if}\;\;x_3\leq h,\\
  \ds  1+s_1^{-1}\sig_0(\frac{x_3-h}{L})^m&\quad \text{if}\;\;h<x_3\leq h+L,
  \end{cases}
\en
where $\sig_0$ is a positive constant, $m\geq 1$ is a given integer. 
In what follows, we will take the real part of the Laplace transform variable $s\in\C_+$ to be
$s_1$, that is, $\Rt(s)=s_1$.

\begin{figure}[!htbp]
\setcounter{subfigure}{0}
  \centering
  \includegraphics[width=4.2in]{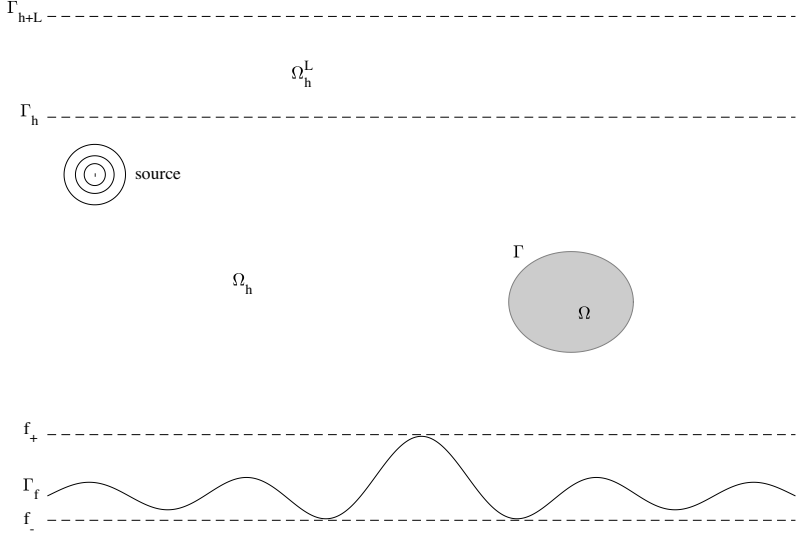}
 \caption{Geometric configuration of the truncated PML problem}\label{PML}
\end{figure}

We now derive the PML equation by the technique of change of variables, starting with
the real stretched coordinate $\hat{x}=(\hat{x}_1,\hat{x}_2,\hat{x}_3)$ with
\ben
\hat{x}_1=x_1,\;\; \hat{x}_2=x_2,\;\; \hat{x}_3=\int_{f_-}^{x_3}\sig(\tau)d\tau+f_-.
\enn

Taking the Laplace transform of the wave equation (\ref{2.2}) with respect to $t$ gives
\be\label{4.2}
\Delta\ch{p}-\frac{s^{2}}{c^{2}}\ch{p}=0\;\;\;\gin\;\;\; \Om_{h}^L.
\en
Denote by $\ch{p}_{pml}$ the PML extension of the pressure $\ch{p}$ satisfying (\ref{4.2}).
Formally, the technique of change of variables requires $\ch{p}_{pml}$ to satisfy
\ben
\sum_{j=1}^{3}\frac{\pa^2\ch{p}_{pml}}{\pa\hat{x}_j^2}-\frac{s^{2}}{c^{2}}\ch{p}_{pml}=0
\;\;\;\gin\;\;\;\Om_{h}^L.
\enn
Then, by the chain rule and using the fact that $d\hat{x}_3/dx_3=\sig$, we obtain the PML equation
\be\label{4.3}
\Delta_p\ch{p}_{pml}-\frac{s^{2}\sig}{c^{2}}\ch{p}_{pml}=0\;\;\;\gin\;\;\;\Om_{h}^L,
\en
where
\ben
\Delta_p:=\sum_{j=1}^{2}\frac{\pa}{\pa x_j}\left(\sig\frac{\pa}{\pa x_j}\right)
+\frac{\pa}{\pa x_3}\left(\frac{1}{\sig}\frac{\pa}{\pa x_3}\right)=\na\cdot(\mathbb{D}\na)
\enn
with the diagonal matrix $\mathbb{D}=\diag(\sig,\sig,1/\sig)$.

Combining the elastic wave equation (\ref{2.1}) and the interface conditions (\ref{2.5})-(\ref{2.6}),
we obtain the truncated PML problem in $s$-domain:
\begin{subnumcases}{} \label{4.5a}
\ds\Delta^*\ch{\bm u}_{pml}-\rho_es^2\ch{\bm u}_{pml}=\0 & $\gin\;\;\Om$,\\ \label{4.5b}
\ds\Delta_p\ch{p}_{pml}-\frac{s^{2}\sig}{c^{2}}\ch{p}_{pml}=-s\ch{g}/c^2 & $\gin\;\;\Om_{h+L}$,\\ \label{4.5c}
\ds\pa_n\ch{p}_{pml}=-\rho_0s^2\n\cdot\ch{\bm u}_{pml}& $\on\;\;\G$,\\\label{4.5d}
\ds -\ch{p}_{pml}\n=\bm{\sig}(\ch{\bm u}_{pml})\n & $\on\;\;\G$,\\ \label{4.5e}
\ds \ch{p}_{pml}=0& $\on\;\;\G_f$,\\\label{4.5f}
\ds \ch{p}_{pml}=0& $\on\;\;\G_{h+L}$,
\end{subnumcases}
where the unbounded domain is truncated into the finite strip layer $\Om_{h+L}$ by imposing
the homogeneous Dirichlet boundary condition on $\G_{h+L}$,
in view of the exponential decay of the transformed pressure field $\ch{p}$.


We now prove the well-posedness of the truncated PML problem (\ref{4.5a})-(\ref{4.5f})
by the variational method in the Hilbert space $\wid{H}:=H_0^1(\Om_{h+L})\times H^1(\Om)^3$,
where $H_0^1(\Om_{h+L}):=\{u\in H^1(\Om_{h+L}):u=0\;\on\;\G_f\cup\G_{h+L}\}$ and
the norm of $\wid{H}$ is defined similarly as that of $H$ in (\ref{3.3a})
with $\Om_h$ replaced by $\Om_{h+L}$.
To this end, use Green's and Betti's formulas as well as the transmission conditions
(\ref{4.5c})-(\ref{4.5d}) to obtain the following variational formulation of the PML problem
(\ref{4.5a})-(\ref{4.5f}): find a solution $(\ch{p}_{pml},\ch{\bm{u}}_{pml})\in\wid{H}$ such that
\be\label{4.6}
a_{pml}\big((\ch{p}_{pml},\ch{\bm{u}}_{pml}),(q,\bm{v})\big)
=\int_{\Om_{h}}\frac{\ch{g}}{c^2}\cdot\ov{q}dx\;\;\quad\forall\;(q,\bm{v})\in\wid{H},
\en
where the sesquilinear form $a_{pml}(\cdot,\cdot)$ is defined as
\ben
&&a_{pml}\left((\ch{p}_{pml},\ch{\bm{u}}_{pml}),(q,\bm{v})\right)\\
&&\quad=\int_{\Om_{h+L}}(s^{-1}\mathbb{D}\na\ch{p}_{pml}\cdot\na\ov{q}
+\frac{s\sig}{c^2}\ch{p}_{pml}\cdot\ov{q})dx\\
&&\qquad+\int_\Om\left[\rho_0\ov{s}\left[\la(\na\cdot\ch{\bm{u}}_{pml})(\na\cdot\ov{\bm{v}})
    +2\mu\bm\vep(\ch{\bm{u}}_{pml}):\bm\vep(\ov{\bm{v}})\right]
+\rho_0\rho_e|s|^2s\ch{\bm{u}}_{pml}\cdot\ov{\bm{v}}\right]dx\\
&&\qquad-\rho_0\int_\G s\n\cdot\ch{\bm{u}}_{pml}\ov{q}d\g
+\rho_0\int_\G\ov{s}\ch{p}_{pml}\n\cdot\ov{\bm{v}}d\g.
\enn
Noting that $1\leq\sig\leq1+s_1^{-1}\sig_0$ for $x\in\Om_{h+L}$, combining the Korn's inequality, we have
\ben
&&\Rt\left[a_{pml}\left((\ch{p}_{pml},\ch{\bm{u}}_{pml}),(\ch{p}_{pml},\ch{\bm{u}}_{pml})\right)\right]\\
&&=\Rt\int_{\Om_{h+L}}s^{-1}\left(\sig|\pa_{x_1}\ch{p}_{pml}|^2+\sig|\pa_{x_2}\ch{p}_{pml}|^2
+\frac{1}{\sig}|\pa_{x_3}\ch{p}_{pml}|^2\right)dx
+\int_{\Om_{h+L}}\frac{s_1\sig}{c^2}|\ch{p}_{pml}|^2dx\\
&&\qquad+\rho_0s_1\left(\la\|\na\cdot\ch{\bm{u}}_{pml}\|_{{L^2(\Om)}}^2
    +2\mu\|\bm\vep(\ch{\bm{u}}_{pml})\|_{F(\Om)}^2
+\rho_e\|s\ch{\bm{u}}_{pml}\|_{{L^2(\Om)}^3}^2\right)\\
&&\gtrsim\frac{1}{1+s_1^{-1}\sig_0}\frac{s_1}{|s|^2}\left(\|\na\ch{p}_{pml}\|_{L^2(\Om_{h+L})^3}^2
+\|s\ch{p}_{pml}\|_{L^2(\Om_{h+L})}^2\right)\\
&&\qquad+s_1\min\{1,s_1^2\}\left(\|\na\ch{\bm{u}}_{pml}\|_{F(\Om)}^2+\|\na\cdot\ch{\bm{u}}_{pml}\|_{{L^2(\Om)}}^2
+\|s\ch{\bm{u}}_{pml}\|_{{L^2(\Om)}^3}^2\right),
\enn
which means that $a_{pml}(\cdot,\cdot)$ is uniformly coercive in $\wid{H}$.

Arguing similarly as in the proof of Lemma \ref{thm3.1} (noting that the TBC in the $s$-domain is now
replaced with the Dirichlet boundary condition), we can obtain the following theorem.

\begin{theorem}\label{thm4.1}
 The truncated PML variational problem (\ref{4.6}) has a unique solution 
$(\ch{p}_{pml},\ch{\bm{u}}_{pml})\in\wid{H}$ for each $s\in\C_+$
 with $\Rt(s)=s_1>0$. Further, we have the following estimates
\be\label{4.9}
&&\|\na\ch{p}_{pml}\|_{L^2(\Om_{h+L})^3}+\|s\ch{p}_{pml}\|_{L^2(\Om_{h+L})}
\lesssim\frac{(1+s_1^{-1}\sig_0)|s|}{s_1}\|\ch{ g}\|_{L^2(\Om_h)},\\ \label{4.10}
&&\|\na\ch{\bm{u}}_{pml}\|_{F(\Om)}+\|\na\cdot\ch{\bm{u}}_{pml}\|_{L^2(\Om)}
+\|s\ch{\bm{u}}_{pml}\|_{{L^2(\Om)}^3}
\lesssim\frac{\sqrt{1+s_1^{-1}\sig_0}}{s_1\min\{1,s_1\}}\|\ch{g}\|_{L^2(\Om_h)}.\;\;
\en
\end{theorem}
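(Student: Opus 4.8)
The plan is to follow the same path as the proof of Lemma~\ref{thm3.1}: first establish that the sesquilinear form $a_{pml}(\cdot,\cdot)$ is bounded and coercive on $\wid{H}\times\wid{H}$, then apply the Lax--Milgram theorem to obtain the unique solvability of (\ref{4.6}), and finally extract the explicit bounds (\ref{4.9})--(\ref{4.10}) by testing (\ref{4.6}) against the solution itself. Boundedness is routine once one records that $1\le\sig\le 1+s_1^{-1}\sig_0$ on $\Om_{h+L}$: every volume integrand of $a_{pml}$ is then dominated by a constant, finite for each fixed $s\in\C_+$ with $\Rt(s)=s_1$, times a product of the $H^1(\Om_{h+L})$- and $H^1(\Om)^3$-norms of its two arguments, while the two interface integrals over $\G$ are controlled by the trace theorem (\cite[Lemma 2.2]{GLZ2017}), each by $\lesssim |s|\,\|(\ch{p}_{pml},\ch{\bm u}_{pml})\|_{\wid{H}}\,\|(q,\bm v)\|_{\wid{H}}$. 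Coercivity is exactly the computation displayed just before the theorem: putting $\bm\om:=(\ch{p}_{pml},\ch{\bm u}_{pml})$ and testing with $(q,\bm v)=\bm\om$, the two boundary terms on $\G$ collapse to $-\rho_0 s\,z+\rho_0\ov{s}\,\ov{z}$ with $z:=\int_\G(\n\cdot\ch{\bm u}_{pml})\ov{\ch{p}_{pml}}\,d\g$, which is purely imaginary and hence drops out of $\Rt[a_{pml}(\bm\om,\bm\om)]$, leaving a positive quadratic form in $\|\na\ch{p}_{pml}\|$, $\|s\ch{p}_{pml}\|$, $\|\bm\vep(\ch{\bm u}_{pml})\|_{F(\Om)}$ and $\|s\ch{\bm u}_{pml}\|$; Korn's inequality (with constant $C_\Om$ depending only on $\Om$) then upgrades the elastic part to the full $H^1(\Om)^3$-norm. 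Since $\Rt(s)=s_1>0$, this gives $\Rt[a_{pml}(\bm\om,\bm\om)]\ge C(s,s_1,\sig_0)\|\bm\om\|_{\wid{H}}^2$ for each fixed $s$, and Lax--Milgram yields the unique $(\ch{p}_{pml},\ch{\bm u}_{pml})\in\wid{H}$.

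For the quantitative estimates I would not use the abstract Lax--Milgram bound, which would obscure the dependence on $|s|$, $s_1$ and $\sig_0$, but re-run the energy identity keeping constants explicit. Writing $X:=\|\na\ch{p}_{pml}\|_{L^2(\Om_{h+L})^3}^2+\|s\ch{p}_{pml}\|_{L^2(\Om_{h+L})}^2$ and $Y:=\|\na\ch{\bm u}_{pml}\|_{F(\Om)}^2+\|\na\cdot\ch{\bm u}_{pml}\|_{L^2(\Om)}^2+\|s\ch{\bm u}_{pml}\|_{L^2(\Om)^3}^2$, the coercivity estimate together with the choice $(q,\bm v)=\bm\om$ in (\ref{4.6}) gives
\ben
\frac{1}{1+s_1^{-1}\sig_0}\,\frac{s_1}{|s|^2}\,X+s_1\min\{1,s_1^2\}\,Y
&\lesssim& \Rt[a_{pml}(\bm\om,\bm\om)]=\Rt\int_{\Om_h}\frac{\ch{g}}{c^2}\,\ov{\ch{p}_{pml}}\,dx\\
&\le& \frac{1}{c^2|s|}\,\|\ch{g}\|_{L^2(\Om_h)}\,X^{1/2},
\enn
where the last step uses $\Om_h\subset\Om_{h+L}$, Cauchy--Schwarz, and $\|\ch{p}_{pml}\|_{L^2(\Om_h)}\le|s|^{-1}\|s\ch{p}_{pml}\|_{L^2(\Om_{h+L})}$. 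Discarding the nonnegative $Y$-term first yields $X^{1/2}\lesssim(1+s_1^{-1}\sig_0)|s|\,s_1^{-1}\|\ch{g}\|_{L^2(\Om_h)}$, which is (\ref{4.9}); feeding this back and now retaining the $Y$-term gives $s_1\min\{1,s_1^2\}\,Y\lesssim(1+s_1^{-1}\sig_0)\,s_1^{-1}\|\ch{g}\|_{L^2(\Om_h)}^2$, so that $Y^{1/2}\lesssim\sqrt{1+s_1^{-1}\sig_0}\,\big(s_1\min\{1,s_1\}\big)^{-1}\|\ch{g}\|_{L^2(\Om_h)}$ (using $\sqrt{\min\{1,s_1^2\}}=\min\{1,s_1\}$), which is (\ref{4.10}).

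Nothing in this argument is genuinely deep; the delicate part is the bookkeeping --- making sure the surviving constants are precisely those appearing in (\ref{4.9})--(\ref{4.10}) and are independent of $s_2=\I(s)$ and of the remaining model parameters. The one structural point I would state carefully, and the only place the PML truncation enters, is that the transparent/DtN boundary term on $\G_h$ used in Lemma~\ref{thm3.1} is here replaced by the homogeneous Dirichlet condition on $\G_{h+L}$; since that condition only deletes a boundary contribution from the form and is already built into the space $\wid{H}=H_0^1(\Om_{h+L})\times H^1(\Om)^3$, neither the boundedness nor the coercivity argument is affected, and the extra factor $1+s_1^{-1}\sig_0$ in the final estimates stems solely from the bounds $1\le\sig\le1+s_1^{-1}\sig_0$ on the stretched coefficients $\mathbb{D}$ and $\sig$.
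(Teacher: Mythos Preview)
Your proposal is correct and follows essentially the same approach as the paper: the coercivity computation you use is precisely the one displayed just before Theorem~\ref{thm4.1}, and the paper then refers to the proof of Lemma~\ref{thm3.1}, which is exactly the energy-testing argument you spell out to extract (\ref{4.9})--(\ref{4.10}). Your bookkeeping of the constants (in particular the factors $(1+s_1^{-1}\sig_0)$ and $\min\{1,s_1\}$) and your observation that the $\G$-boundary terms combine to a purely imaginary quantity are correct, and your remark that the Dirichlet condition on $\G_{h+L}$ replaces the DtN term of Lemma~\ref{thm3.1} is precisely the point the paper highlights.
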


Taking the inverse Laplace transform of the system (\ref{4.5a})-(\ref{4.5f}),
we obtain the truncated PML problem in the time-domain:
\be\label{tdpml}
\begin{cases}
\ds\Delta^*\bm u_{pml}-\rho_e\pa_t^{2}\bm u_{pml}=\0 & \gin\;\;\Om\times(0,T),\\
\ds\Delta_p p_{pml}-\frac{\sig}{c^2}\pa_t^{2}p_{pml}=-\pa_t g/c^2 & \gin\;\;\Om_{h+L}\times(0,T),\\
\ds\bm u_{pml}(x,0)=\pa_t\bm u_{pml}(x,0)=\0 & \gin\;\;\Om,\\
\ds p_{pml}(x,0)=\pa_t p_{pml}(x,0)=0 & \gin\;\;\Om_{h+L},\\
\ds\pa_n p_{pml}=-\rho_0\n\cdot\pa_t^2\bm u_{pml} & \on\;\;\G\times(0,T),\\
\ds-p_{pml}\n=\bm{\sig}(\bm u_{pml})\n & \on\;\;\G\times(0,T),\\
\ds p_{pml}=0 & \on\;\;\G_f\times(0,T),\\
\ds p_{pml}=0 & \on\;\;\G_{h+L}\times(0,T).
\end{cases}
\en
Note that $s_1$ appearing in PML mefium property $\sig$ is an arbitrarily fixed, positive parameter,
as mentioned earlier at the beginning of this subsection. In the Laplace transform domain,
the transform variable $s\in\C_+$ is taken so that $\Rt(s)=s_1>0$, and in the subsequent study of
the well-posedness and convergence of the truncated PML problem (\ref{tdpml}), we take $s_1=1/T$.

By using Theorem \ref{thm4.1} and a similar argument as in the proof of Theorem 3.2 in \cite{GLZ2017},
we can establish the following result on the well-posedness and stability of the PML problem (\ref{tdpml}).

\begin{theorem}\label{thm4.1+}
Let $s_1=1/T$. Then the truncated PML problem $(\ref{tdpml})$ has a unique solution $\left(p_{pml},\bm{u}_{pml}\right)$
with
\ben
&& p_{pml}\in L^2\left(0,T;H^1_0(\Om_{h+L})\right)\cap H^1\left(0,T;L^2(\Om_{h+L})\right),\\
&& \bm{u}_{pml}\in L^2\left(0,T;H^1(\Om)^3\right)\cap H^1\left(0,T;L^2(\Om)^3\right)
\enn
and satisfies the stability estimates
\ben
&&\max\limits_{t\in[0,T]}(\|\pa_t p_{pml}\|_{L^2(\Om_{h+L})}
+\|\na p_{pml}\|_{L^2(\Om_{h+L})^3})\lesssim (1+\sig_0T)\|\pa_t g\|_{L^1(0,T;L^2(\Om_h))},\\
&&\max\limits_{t\in[0,T]}(\|\pa_t\bm{u}_{pml}\|_{L^2(\Om)^3}+\|\na\cdot\bm{u}_{pml}\|_{L^2(\Om)}
+\|\na\bm{u}_{pml}\|_{F(\Om)})\\
&&\qquad\qquad\qquad\;\;\lesssim\sqrt{1+\sig_0T}\|\pa_t g\|_{L^1(0,T;L^2(\Om_h))}.
\enn
\end{theorem}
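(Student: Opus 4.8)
The plan is to mimic the proof of Theorem \ref{thm3.3} (equivalently, the proof of Theorem 3.2 in \cite{GLZ2017}), now feeding in the $s$-domain estimates of Theorem \ref{thm4.1} in place of those of Lemma \ref{thm3.1}. First I would apply the Laplace transform in $t$ to the time-domain PML system (\ref{tdpml}); since all initial data vanish, $(\mathscr{L}(p_{pml}),\mathscr{L}(\bm u_{pml}))$ is, for every $s$ on the line $\Rt(s)=s_1$, exactly the solution of the $s$-domain PML system (\ref{4.5a})--(\ref{4.5f}), which by Theorem \ref{thm4.1} exists, is unique, and obeys the bounds (\ref{4.9})--(\ref{4.10}). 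The only structural change compared with the reduced problem (\ref{reduced}) is that the transparent boundary condition $\pa_n\ch{p}=\mathscr{B}[\ch{p}]$ on $\G_h$ is replaced by the homogeneous Dirichlet condition $\ch{p}_{pml}=0$ on $\G_{h+L}$; in the weak formulation this merely removes the DtN boundary term, so that Lemma \ref{lem2.1} is not even needed here, and every remaining step of the argument of \cite{GLZ2017} carries over verbatim.

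For existence and uniqueness in the claimed function spaces I would invert the Laplace transform along the contour $\Rt(s)=s_1$. Assumption (\ref{assumption}) lets one write $\ch{g}=s^{-3}\mathscr{L}(\pa_t^3 g)$, which more than offsets the $|s|$-growth in (\ref{4.9})--(\ref{4.10}), so that $\ch{p}_{pml}(s)$, $\na\ch{p}_{pml}(s)$, $s\ch{p}_{pml}(s)$ and the corresponding quantities for $\ch{\bm u}_{pml}$ all decay at least like $|s|^{-2}$ along the contour; together with (\ref{assumption1}) this is exactly what is needed for the Bromwich integral to converge and to define $(p_{pml},\bm u_{pml})$ lying in $L^2(0,T;H_0^1(\Om_{h+L}))\cap H^1(0,T;L^2(\Om_{h+L}))$ and $L^2(0,T;H^1(\Om)^3)\cap H^1(0,T;L^2(\Om)^3)$ respectively, and solving (\ref{tdpml}); uniqueness is inherited from that of the $s$-domain problem.

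For the stability estimates I would instead substitute $\ch{g}=s^{-1}\mathscr{L}(\pa_t g)$ into (\ref{4.9})--(\ref{4.10}) to cancel the factor $|s|$, and use $|s|^{-1}\le s_1^{-1}$ on the line $\Rt(s)=s_1$, obtaining $\|\mathscr{L}(\pa_t p_{pml})(s)\|_{L^2(\Om_{h+L})}+\|\mathscr{L}(\na p_{pml})(s)\|_{L^2(\Om_{h+L})^3}\lesssim(1+s_1^{-1}\sig_0)[\Rt(s)]^{-1}\|\mathscr{L}(\pa_t g)(s)\|_{L^2(\Om_h)}$ and the analogue for $\bm u_{pml}$ with $(1+s_1^{-1}\sig_0)$ replaced by $\sqrt{1+s_1^{-1}\sig_0}/\min\{1,s_1\}$. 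Combining these with the Parseval identity for the Laplace transform (Appendix A) and an estimate of the type used in the proof of Theorem 3.2 in \cite{GLZ2017}, which converts a bound of the form $\|(\mathscr{L}w)(s)\|\lesssim C_0[\Rt(s)]^{-1}\|(\mathscr{L}F)(s)\|$ into $\|w\|_{L^\infty(0,T;\,\cdot)}\lesssim C_0\|F\|_{L^1(0,T;\,\cdot)}$, and finally setting $s_1=1/T$ so that $1+s_1^{-1}\sig_0=1+\sig_0T$ while the residual purely $T$-dependent constants are absorbed into $\lesssim$, produces precisely the two stability estimates in the statement. The main obstacle I expect is bookkeeping rather than new ideas: one must carry the PML-induced loss of coercivity, encoded in $1\le\sig\le 1+s_1^{-1}\sig_0$ on $\Om_{h+L}$, cleanly through every inequality so that the constants come out exactly as $1+\sig_0T$ and $\sqrt{1+\sig_0T}$ and no worse, and one must verify the hypotheses of the Laplace-domain lemma for the fixed stretching profile $\sig$, namely analyticity of $(\ch{p}_{pml},\ch{\bm u}_{pml})$ in a half-plane $\Rt(s)\ge s_1$ together with the decay in $\I(s)$ invoked above, which is again guaranteed by (\ref{assumption}).
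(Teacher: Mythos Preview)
Your proposal is correct and matches the paper's approach exactly: the paper's own proof of this theorem is a single sentence pointing to Theorem~\ref{thm4.1} together with the argument of Theorem~3.2 in \cite{GLZ2017}, and your outline is a faithful elaboration of precisely that strategy, including the observation that the DtN boundary term on $\G_h$ is simply replaced by a homogeneous Dirichlet condition on $\G_{h+L}$ and the final substitution $s_1=1/T$ that turns $1+s_1^{-1}\sig_0$ into $1+\sig_0T$.
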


\subsection{A DtN operator for the PML problem}

We now derive an error estimate between the DtN operators of the original scattering problem
(\ref{2.8}) or equivalently the problem (\ref{reduced}) and the PML problem (\ref{tdpml}).
We start by introducing the DtN operator $\mathscr{B}_{pml}:H^{1/2}(\G_h)\ra H^{-1/2}(\G_h)$
associated with the truncated PML problem (\ref{4.5a})-(\ref{4.5f}).
Given $\phi\in H^{1/2}(\G_h)$, define $\mathscr{B}_{pml}\phi:=\pa_{x_3}v\;\on\;\G_h$,
where $v\in H^{1}(\Om_h^L)$ satisfies the following problem in the PML layer:
\be\label{4.13}
 \begin{cases}
  \ds\Delta_pv-\frac{s^{2}\sig}{c^{2}}v=0 & \gin\;\;\Om_{h}^L,\\
  \ds v=\phi & \on\;\;\G_{h},\\
  \ds v=0 & \on\;\;\G_{h+L}.
 \end{cases}
\en
Taking the Fourier transform of (\ref{4.13}) with respect to $\tilde{x}$, we get
\be\label{4.14}
 \begin{cases}
  \ds\frac{\pa}{\pa_{x_3}}\left(\frac{1}{\sig}\frac{\pa}{\pa x_3}\hat{v}(\xi,x_3)\right)
  \ds-\left(\frac{s^{2}\sig}{c^{2}}+\sig|\xi|^2\right)\hat{v}(\xi,x_3)=0,& h<x_3<h+L,\\
  \ds\hat{v}(\xi,x_3)=\hat{\phi}(\xi,h),& x_3=h,\\
  \ds\hat{v}(\xi,x_3)=0, & x_3=h+L.
 \end{cases}\;\;\;
\en
Solving (\ref{4.14}) gives
\be\label{4.15}
\hat{v}(\xi,x_3)=Ae^{\beta(\xi)(\hat{x}_3-h)}+Be^{-\beta(\xi)(\hat{x}_3-h)},\;\;\;h<x_3<h+L,
\en
where $\beta(\xi)=\sqrt{s^2/c^2+|\xi|^2}$ with $\Rt[\beta(\xi)]>0$ and $A,B$ are unknown
functions of $\xi$ to be determined.

Take $x_3=h$. Then, by the definition of $\sig$ we have $\hat{x}_3=h$, and so (\ref{4.15})
implies that
\be\label{AB1}
A+B=\hat{\phi}(\xi,h).
\en
Now, take $x_3=h+L$. Then a direct calculation gives
\ben\label{hight}
\hat{x}_3-h=\int_{f_-}^{h+L}\sig(\tau)d\tau+f_--h
=\int_{h}^{h+L}\sig(\tau)d\tau=\left(1+\frac{s_1^{-1}\sig_0}{m+1}\right)L:=\wid{L}.\;\quad
\enn
This, together with (\ref{4.15}), implies that
\be\label{AB2}
Ae^{\beta(\xi)\wid{L}}+Be^{-\beta(\xi)\wid{L}}=0.
\en
Solving (\ref{AB1}) and (\ref{AB2}) for $A$ and $B$ gives
\ben
A=-\frac{e^{-\beta(\xi)\wid{L}}\hat{\phi}(\xi,h)}{e^{\beta(\xi)\wid{L}}-e^{-\beta(\xi)\wid{L}}},\;\;\;
B=\frac{e^{\beta(\xi)\wid{L}}\hat{\phi}(\xi,h)}{e^{\beta(\xi)\wid{L}}-e^{-\beta(\xi)\wid{L}}}.
\enn
Then we obtain the following solution of (\ref{4.14}):
\be\label{4.16}
\hat{v}(\xi,x_3)=\frac{e^{-\beta(\xi)(\hat{x}_3-h-\wid{L})}-e^{\beta(\xi)(\hat{x}_3-h-\wid{L})}}
{e^{\beta(\xi)\wid{L}}-e^{-\beta(\xi)\wid{L}}}\hat{\phi}(\xi,h),
\;\;\;h<x_3<h+L.
\en
Taking the derivative of (\ref{4.16}) with respect to $x_3$ and evaluating its value at $x_3=h$,
we obtain
\ben
\frac{\pa\hat{v}(\xi,h)}{\pa x_3}=-\beta(\xi)\frac{e^{-\beta(\xi)\wid{L}}
+e^{\beta(\xi)\wid{L}}}{e^{\beta(\xi)\wid{L}}-e^{-\beta(\xi)\wid{L}}}\hat{\phi}(\xi,h),
\enn
where we have used the fact that $\sig(h)=1$.
Now define
\ben
\wi{\mathscr{B}_{pml}\phi}=-\beta(\xi)\frac{e^{-\beta(\xi)\wid{L}}
 +e^{\beta(\xi)\wid{L}}}{e^{\beta(\xi)\wid{L}}-e^{-\beta(\xi)\wid{L}}}\hat{\phi}(\xi,h).
\enn
Then taking the inverse Fourier transform of the above equation leads to
the DtN operator $\mathscr{B}_{pml}$ defined as follows:
\ben
\mathscr{B}_{pml}\phi(\wid{x},h)=-\int_{\R^2}\beta(\xi)\frac{e^{-\beta(\xi)\wid{L}}
+e^{\beta(\xi)\wid{L}}}{e^{\beta(\xi)\wid{L}}-e^{-\beta(\xi)\wid{L}}}
\hat{\phi}(\xi,h)e^{i\xi\cdot\wid{x}}d\xi,\;\;\;\phi\in H^{1/2}(\G_h).
\enn
Hence, the truncated PML problem (\ref{4.5a})-(\ref{4.5f}) can be equivalently reduced to
the boundary value problem in $\Om_h\cup\Om$:
\be\label{4.17}
\begin{cases}
\ds \Delta^*\ch{\bm u}_{pml}-\rho_es^2\ch{\bm u}_{pml}=\0& \gin\;\;\;\Om, \\
\ds \Delta\ch{p}_{pml}-\frac{s^{2}}{c^{2}}\ch{p}_{pml}=-s\ch{g}/c^2& \gin\;\;\;\Om_{h},\\
\ds \pa_n\ch{p}_{pml}=-\rho_0s^2\n\cdot\ch{\bm u}_{pml}& \on\;\;\;\G,\\
\ds -\ch{p}_{pml}\n=\bm{\sig}(\ch{\bm u}_{pml})\n& \on\;\;\;\G,\\
\ds \ch{p}_{pml}=0& \on\;\;\;\G_f,\\
\ds \pa_{x_3}\ch{p}_{pml}=\mathscr{B}_{pml}[\ch{p}_{pml}]& \on\;\;\;\G_{h}.
\end{cases}
\en

Similarly as in the derivation of the problems (\ref{3.2}) and (\ref{4.6}), we can obtain
the variational formulation of the problem (\ref{4.17}):
find $(\ch{p}_{pml},\ch{\bm u}_{pml})\in H$ such that
\be\label{4.18}
a_{p}\big((\ch{p}_{pml},\ch{\bm{u}}_{pml}),(q,\bm{v})\big)
=\int_{\Om_{h}}\frac{\ch{g}}{c^2}\cdot\ov{q}dx\quad\;\;\forall\;(q,\bm{v})\in H,
\en
where the sesquilinear form $a_p(\cdot,\cdot)$ is defined as
\ben
&&a_{p}\big((\ch{p}_{pml},\ch{\bm{u}}_{pml}),(q,\bm{v})\big)\\
&&\;\;=\int_{\Om_{h}}(s^{-1}\na\ch{p}_{pml}\cdot\na\ov{q}
+\frac{s}{c^2}\ch{p}_{pml}\cdot\ov{q})dx\\
&&\;\quad+\int_\Om\Big[\rho_0\ov{s}\big(\la(\na\cdot\ch{\bm{u}}_{pml})(\na\cdot\ov{\bm{v}})+
2\mu\bm\vep(\ch{\bm{u}}_{pml}):\bm\vep(\ov{\bm{v}}))\big)
+\rho_0\rho_e|s|^2s\ch{\bm{u}}_{pml}\cdot\ov{\bm{v}}\Big]dx\\
&&\;\quad-\int_{\G_h}s^{-1}\mathscr{B}_{pml}[\ch{p}_{pml}]\cdot\ov{q}d\g
-\rho_0\int_\G s\n\cdot\ch{\bm{u}}_{pml}\ov{q}d\g
+\rho_0\int_\G\ov{s}\ch{p}_{pml}\n\cdot\ov{\bm{v}}d\g.
\enn

\subsection{Exponential convergence of the time-domain PML solution}

In this subsection, we derive an error estimate between the solution $(p,\bm u)$ of
the original problem (\ref{2.8}) and the solution $(p_{pml},\bm u_{pml})$ of the PML problem (\ref{tdpml}).
To this end, we introduce some notations and norms. Denote by $L(X,Y)$ the standard space of
the bounded linear operators from the Hilbert space $X$ to the Hilbert space $Y$.

We now establish the following error estimate between the DtN operators $\mathscr{B}$ and
$\mathscr{B}_{pml}$ which is essential for the convergence analysis of the PML method.

\begin{theorem}\label{thm4.3}
Let $\ov{L}={\sig_0L}/({m+1})$, $s=s_1+is_2$ with $s_1>0$. Then we have
\be\label{thm4.3a}
\|\mathscr{B}-\mathscr{B}_{pml}\|_{L(H^{1/2}(\G_h),H^{-1/2}(\G_h))}\le\max\left\{1,\frac{|s|}{c}\right\}
\frac{2e^{-2\ov{L}/c}}{1-e^{-2\ov{L}/c}}:=C_{U}(s,\ov{L}).\;\qquad
\en
\end{theorem}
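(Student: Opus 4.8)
The plan is to compare the two DtN operators symbol by symbol in the Fourier variable $\xi$, since both are Fourier multipliers acting on functions on $\G_h$. From \eqref{2.18}, the symbol of $\mathscr{B}$ is $-\beta(\xi)$, while from the formula derived just above Theorem \ref{thm4.3} the symbol of $\mathscr{B}_{pml}$ is $-\beta(\xi)\,[e^{-\beta(\xi)\wid L}+e^{\beta(\xi)\wid L}]/[e^{\beta(\xi)\wid L}-e^{-\beta(\xi)\wid L}]$, where $\wid L=(1+s_1^{-1}\sig_0)L$. First I would compute the symbol of the difference:
\ben
\beta(\xi)-\beta(\xi)\frac{e^{\beta(\xi)\wid L}+e^{-\beta(\xi)\wid L}}{e^{\beta(\xi)\wid L}-e^{-\beta(\xi)\wid L}}
=-\beta(\xi)\,\frac{2e^{-\beta(\xi)\wid L}}{e^{\beta(\xi)\wid L}-e^{-\beta(\xi)\wid L}}
=-\beta(\xi)\,\frac{2e^{-2\beta(\xi)\wid L}}{1-e^{-2\beta(\xi)\wid L}}.
\enn
So the task reduces to bounding, uniformly in $\xi\in\R^2$, the scalar quantity $(1+|\xi|^2)^{-1/2}\,|\beta(\xi)|\cdot |2e^{-2\beta(\xi)\wid L}/(1-e^{-2\beta(\xi)\wid L})|$, because the operator norm from $H^{1/2}(\G_h)$ to $H^{-1/2}(\G_h)$ of a multiplier with symbol $M(\xi)$ equals $\sup_\xi (1+|\xi|^2)^{-1/2}|M(\xi)|$ by the definition \eqref{4.20} of the norms.

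The next step is to estimate the three factors. For the first factor, $|\beta(\xi)|=|s^2/c^2+|\xi|^2|^{1/2}\le |s|/c + |\xi| \lesssim \max\{1,|s|/c\}(1+|\xi|^2)^{1/2}$ (up to the harmless constant the statement absorbs into $C_U$); more carefully one checks $(1+|\xi|^2)^{-1/2}|\beta(\xi)|\le \max\{1,|s|/c\}$ directly from $|\beta(\xi)|^2\le |s|^2/c^2+|\xi|^2$. For the exponential factor, the crucial observation is the lower bound on $\Rt[\beta(\xi)\wid L]$. Since $\Rt(s)=s_1>0$ we have $\Rt[\beta(\xi)]=\Rt\sqrt{s^2/c^2+|\xi|^2}>0$, and I would show $\Rt[\beta(\xi)]\ge s_1/c$ for all real $\xi$: writing $\beta^2=s^2/c^2+|\xi|^2$ and using that $\Rt\sqrt{z}\ge \Rt\sqrt{w}$ whenever $\Rt(z)\ge\Rt(w)>0$ and $z-w\ge 0$ is real (here $z=\beta^2$, $w=s^2/c^2$), one gets $\Rt[\beta(\xi)]\ge\Rt[s/c]=s_1/c$. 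Then $\Rt[\beta(\xi)\wid L]\ge (s_1/c)\wid L = (s_1/c)(1+s_1^{-1}\sig_0)L = (s_1 L + \sig_0 L)/c \ge \sig_0 L/c \ge \ov L/c$ using $\wid L \ge s_1^{-1}\sig_0 L = \ov L(m+1)/s_1 \cdot s_1 $ — more directly $\wid L/c \ge (s_1^{-1}\sig_0/(m+1))L\cdot(m+1)/(s_1^{-1}) \cdots$; the clean line is $(s_1/c)\wid L\ge \sig_0 L/c\ge \ov L/c$ since $\ov L=\sig_0L/(m+1)\le \sig_0 L$. Hence $|e^{-2\beta(\xi)\wid L}|=e^{-2\Rt[\beta(\xi)\wid L]}\le e^{-2\ov L/c}<1$, and the function $t\mapsto 2t/(1-t)$ is increasing on $[0,1)$, so $|2e^{-2\beta(\xi)\wid L}/(1-e^{-2\beta(\xi)\wid L})|\le 2e^{-2\ov L/c}/(1-e^{-2\ov L/c})$ (after also using $|1-e^{-2\beta\wid L}|\ge 1-e^{-2\ov L/c}$).

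Combining the three bounds gives $\sup_\xi (1+|\xi|^2)^{-1/2}|M(\xi)|\le \max\{1,|s|/c\}\cdot 2e^{-2\ov L/c}/(1-e^{-2\ov L/c})=C_U(s,\ov L)$, which is exactly \eqref{thm4.3a}. I expect the main obstacle to be the uniform lower bound $\Rt[\beta(\xi)]\ge s_1/c$, i.e.\ justifying carefully the monotonicity of $\Rt\sqrt{\cdot}$ along the relevant ray in $\C$ with the branch $\Rt[\beta]>0$; once that is in place, everything else is an elementary estimate of the hyperbolic-type factor and the definition of the multiplier norm. A secondary point requiring a little care is the claim $|1-e^{-2\beta(\xi)\wid L}|\ge 1-|e^{-2\beta(\xi)\wid L}|\ge 1-e^{-2\ov L/c}$, which is just the reverse triangle inequality together with the decay estimate already established.
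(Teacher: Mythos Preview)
Your approach is exactly the paper's: reduce to the Fourier multiplier symbol, bound $(1+|\xi|^2)^{-1/2}|\beta(\xi)|\le\max\{1,|s|/c\}$, and control the exponential factor via $\Rt[\beta(\xi)]\ge s_1/c$ together with $s_1\wid L\ge\ov L$. Two small corrections: first, $\wid L=(1+s_1^{-1}\sigma_0/(m{+}1))L$, not $(1+s_1^{-1}\sigma_0)L$, so that $s_1\wid L=s_1L+\ov L\ge\ov L$ directly; second, your monotonicity hypothesis ``$\Rt(w)>0$'' can fail for $w=s^2/c^2$ when $|s_2|>s_1$, but the inequality $\Rt\sqrt{w+t}\ge\Rt\sqrt{w}$ for real $t\ge0$ holds without it (since $|w+t|+\Rt(w+t)=|w+t|+t+\Rt(w)\ge|w|+\Rt(w)$), and the paper verifies the same bound by writing $\beta_r(\xi)=\sqrt{(|\beta^2|+\Rt\beta^2)/2}$ explicitly and checking it is minimized at $\xi=0$.
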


\begin{proof}
By the definition of the norm (\ref{4.20}) it follows that for any $\phi\in H^{1/2}(\G_h)$,
\ben
&&\|(\mathscr{B}-\mathscr{B}_{pml})\phi\|_{H^{-1/2}(\G_h)}^2\\
&&\quad=\int_{\R^2}|\beta(\xi)|^2(1+|\xi|^2)^{-1/2}\left|1-\frac{e^{-\beta(\xi)\wid{L}}
+e^{\beta(\xi)\wid{L}}}{e^{\beta(\xi)\wid{L}}-e^{-\beta(\xi)\wid{L}}}\right|^2|\hat{\phi}(\xi)|^2d\xi\\
&&\quad\leq\int_{\R^2}\left(\frac{|s|^2}{c^2}+|\xi|^2\right)(1+|\xi|^2)^{-1/2}\left|1-\frac{e^{-\beta(\xi)\wid{L}}
+e^{\beta(\xi)\wid{L}}}{e^{\beta(\xi)\wid{L}}-e^{-\beta(\xi)\wid{L}}}|^2|\hat{\phi}(\xi)\right|^2d\xi\\
&&\quad\le\max\left\{1,\frac{|s|^2}{c^2}\right\}\sup\limits_{\xi\in\R^2}\left|1-\frac{e^{-\beta(\xi)\wid{L}}
+e^{\beta(\xi)\wid{L}}}{e^{\beta(\xi)\wid{L}}-e^{-\beta(\xi)\wid{L}}}\right|^2\|\phi\|^2_{H^{1/2}(\G_h)}.
\enn
Thus
\ben
\|\mathscr{B}-\mathscr{B}_{pml}\|_{L(H^{1/2}(\G_h),H^{-1/2}(\G_h))}\le\max\left\{1,\frac{|s|}{c}\right\}
\sup\limits_{\xi\in\R^2}\left|1-\frac{e^{-\beta(\xi)\wid{L}}+e^{\beta(\xi)\wid{L}}}
{e^{\beta(\xi)\wid{L}}-e^{-\beta(\xi)\wid{L}}}\right|
\enn
It is easy to see that
\be\label{4.24}
\sup\limits_{\xi\in\R^2}\left|1-\frac{e^{-\beta(\xi)\wid{L}}+e^{\beta(\xi)\wid{L}}}
{e^{\beta(\xi)\wid{L}}-e^{-\beta(\xi)\wid{L}}}\right|
&=&\sup\limits_{\xi\in\R^2}\frac{\left|2e^{-2\beta_r(\xi)\wid{L}}\right|}
{\left|1-e^{-2[\beta_r(\xi)+i\beta_i(\xi)]\wid{L}}\right|}\no\\
&\le&\sup\limits_{\xi\in\R^2}\frac{2e^{-2\beta_r(\xi)\wid{L}}}{1-e^{-2\beta_r(\xi)\wid{L}}},
\en
where $\beta_r(\xi)=\Rt[\beta(\xi)]$ and $\beta_i(\xi)=\I[\beta(\xi)]$.
By the formulas
\ben
z^{1/2}=\sqrt{\frac{|z|+z_1}{2}}+i\,\mbox{sign}(z_2)\sqrt{\frac{|z|-z_1}{2}},\;\;\;
z=z_1+iz_2,\;\;\;\Rt[z^{1/2}]>0,
\enn
we have
\ben
\beta_r(\xi)&=&\frac1{\sqrt{2}}\left(|\beta^2(\xi)|+\Rt[\beta^2(\xi)]\right)^{1/2}\\
&=&\frac1{\sqrt{2}}\left(\left[\left(\frac{s_1^2-s_2^2}{c^2}+|\xi|^2\right)^2
+\frac{4s_1^2s_2^2}{c^4}\right]^{1/2}+\frac{s_1^2-s_2^2}{c^2}+|\xi|^2\right)^{1/2}.
\enn
Since ${2e^{-2\beta_r(\xi)\wid{L}}}/[{1-e^{-2\beta_r(\xi)\wid{L}}}]$ is monotonically decreasing
with respect to $\beta_r(\xi)$, then we need to seek the minimum of $\beta_r(\xi)$ in $\R^2$.
A direct calculation yields that $\xi=0$ is the unique minimum point of the function $\beta_r(\xi)$,
and thus
\ben
\beta_r(0)=\frac{s_1}{c},\;\;\;
\left.\frac{2e^{-2\beta_r(\xi)\wid{L}}}{1-e^{-2\beta_r(\xi)\wid{L}}}\right|_{\xi=0}
\le\frac{2e^{-2c^{-1}\ov{L}}}{1-e^{-2c^{-1}\ov{L}}} .
\enn
In addition, $\beta_r(\xi)\ra +\infty$ as $\xi\ra\infty$, and so ${2e^{-2\beta_r(\xi)\wid{L}}}/[{1-e^{-2\beta_r(\xi)\wid{L}}}]\ra 0$ as $\xi\ra\infty$.
It is then concluded that
\ben
\sup\limits_{\xi\in\R^2}\frac{2e^{-2\beta_r(\xi)\wid{L}}}{1-e^{-2\beta_r(\xi)\wid{L}}}
\le\frac{2e^{-2c^{-1}\ov{L}}}{1-e^{-2c^{-1}\ov{L}}}.
\enn
This, together with (\ref{4.24}), implies the required estimate (\ref{thm4.3a}).
\end{proof}

For $\bm\om:=(\ch{p},\ch{\bm u})\in H$ and $\bm\theta:=(q,\bm{v})\in H$ it easily follows
by the definition of the sesquilinear forms $a(\cdot,\cdot)$ and $a_p(\cdot,\cdot)$ that
\be\label{dtnerror}
&&|a(\bm\om,\bm\theta)-a_p(\bm\om,\bm\theta)|\no\\
&&=\left|\int_{\G_h}s^{-1}\ov{ q}(\mathscr{B}-\mathscr{B}_{pml})\bm\ch{p}d\g\right|\no\\
&&\le |s|^{-1}\|q\|_{H^{1/2}(\G_h)}\|\mathscr{B}-\mathscr{B}_{pml}\|_{L(H^{1/2}(\G_h),H^{-1/2}(\G_h))}
\|\bm\ch{p}\|_{H^{1/2}(\G_h)}\no\\
&&\le |s|^{-1}\left[1+(h-f_+)^{-1}\right]\|q\|_{H^1(\Om_h)}
\|\mathscr{B}-\mathscr{B}_{pml}\|_{L(H^{1/2}(\G_h),H^{-1/2}(\G_h))}\|\bm\ch{p}\|_{H^{1}(\Om_h)}\no\\
&&\le |s|^{-1}\left[1+(h-f_+)^{-1}\right]
\|\mathscr{B}-\mathscr{B}_{pml}\|_{L(H^{1/2}(\G_h),H^{-1/2}(\G_h))}\|\bm\theta\|_{H}\|\bm\om\|_{H},
\en
where we have used the trace theorem (see \cite[Lemma 2.2]{GLZ2017}) to get the second inequality.
Using (\ref{dtnerror}) and Theorem \ref{thm4.3}, we can now prove the exponential convergence of
the PML method.

\begin{theorem}\label{convergence}
Let $(p,\bm u)$ be the solution of the problem $(\ref{2.8})$ and let $(p_{pml},\bm u_{pml})$ be
the solution of the truncated PML problem $(\ref{tdpml})$ with $s_1=1/T$ in the time-domain.
Then, under the assumptions $(\ref{assumption})$ and $(\ref{assumption1})$,
we have the error estimate
\be\label{result}
&&\int_0^T(\|p-p_{pml}\|^2_{H^1(\Om_h)}+\|\bm u-\bm u_{pml}\|^2_{H^1(\Om)^3})dt\no\\
&&\lesssim\max\{1,T^2\}(T^4+T^2)(\g_1+\g_2)(1+\sig_0T)^2\frac{e^{-4\sig_0L/c}}{(1-e^{-2\sig_0L/c})^2}
\|g\|^2_{H^3(0,T;L^2(\Om_h))}\;\;
\en
where $\g_1$ and $\g_2$ are positive constants which are independent of $(p,\bm u)$ and
$(p_{pml},\bm u_{pml})$ but may depend on $T$.
\end{theorem}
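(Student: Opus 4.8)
The plan is to work in the Laplace domain, compare the two variational problems \eqref{3.2} and \eqref{4.18}, convert the $s$-domain error bound into a time-domain estimate via the Plancherel/Parseval identity for the Laplace transform, and finally invoke the assumptions \eqref{assumption}--\eqref{assumption1} to control the $s$-weighted norms of $\ch g$ by $\|g\|_{H^3(0,T;L^2(\Om_h))}$. First I would observe that both $(\ch p,\ch{\bm u})$ (from \eqref{2.10}, equivalently \eqref{3.2}) and $(\ch p_{pml},\ch{\bm u}_{pml})$ (from \eqref{4.17}, equivalently \eqref{4.18}) live in the same space $H$ and solve problems with the same right-hand side but different sesquilinear forms $a$ and $a_p$. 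Setting $\bm e:=(\ch p-\ch p_{pml},\ch{\bm u}-\ch{\bm u}_{pml})\in H$, subtracting the two variational identities gives, for all $\bm\theta\in H$,
\[
a(\bm e,\bm\theta)=a(\bm\om_{pml},\bm\theta)-a_p(\bm\om_{pml},\bm\theta),
\]
where $\bm\om_{pml}:=(\ch p_{pml},\ch{\bm u}_{pml})$. Taking $\bm\theta=\bm e$, using the uniform coercivity \eqref{coercivity} of $a$ on the left, and bounding the right-hand side by \eqref{dtnerror} combined with Theorem \ref{thm4.3}, I obtain
\[
C(s)\|\bm e\|_H^2\le |s|^{-1}\bigl[1+(h-f_+)^{-1}\bigr]\,C_U(s,\ov L)\,\|\bm e\|_H\,\|\bm\om_{pml}\|_H,
\]
so that $\|\bm e\|_H\lesssim |s|^{-1}C(s)^{-1}C_U(s,\ov L)\|\bm\om_{pml}\|_H$. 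Then I plug in the explicit coercivity constant $C(s)=\min\{s_1C_1/|s|^2,\rho_0 s_1 C_2\}$ with $C_1=\min\{1,|s|^2/c^2\}$, $C_2=C_\Om\min\{2\mu,\rho_e\min\{1,|s|^2\}\}$, and the PML stability bounds \eqref{4.9}--\eqref{4.10} for $\|\bm\om_{pml}\|_H$, which contribute the factor $(1+s_1^{-1}\sig_0)|s|/(s_1\min\{1,s_1\})$ times $\|\ch g\|_{L^2(\Om_h)}$. Since we fix $s_1=1/T$, the $s_1$-powers turn into powers of $T$ and the $(1+s_1^{-1}\sig_0)$ factor becomes $(1+\sig_0 T)$; collecting everything yields a pointwise-in-$s$ bound of the schematic form $\|\bm e\|_H^2\lesssim P(|s|,T)(1+\sig_0T)^2 e^{-4\ov L/c}/(1-e^{-2\ov L/c})^2\,\|\ch g\|_{L^2(\Om_h)}^2$ for a rational weight $P(|s|,T)$ of $|s|$ and $T$.

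Next I would pass to the time domain. By the standard Laplace–Plancherel identity (as used throughout \cite{GLZ2017}), for $s=s_1+is_2$ with $s_1=1/T$ fixed,
\[
\int_0^\infty e^{-2s_1 t}\|\bm e(t)\|_H^2\,dt
=\frac{1}{2\pi}\int_{-\infty}^{\infty}\|\ch{\bm e}(s_1+is_2)\|_H^2\,ds_2,
\]
and $e^{-2s_1 t}\ge e^{-2}$ on $[0,T]$, so the left side controls $\int_0^T\|\bm e\|_H^2\,dt$ up to the constant $e^{2}$. On the right I substitute the pointwise bound from the previous paragraph. The only subtlety is that the weight $P(|s|,T)$ grows polynomially in $|s|$ (coming from $|s|^2/C(s)$ and from the $|s|$ in \eqref{4.9}), so I cannot integrate $P(|s|,T)|\ch g(s)|^2$ directly without extra decay in $\ch g$. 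This is exactly where the regularity hypothesis \eqref{assumption}--\eqref{assumption1} enters: with $g\in H^3(0,\infty;L^2(\Om_h))$ and vanishing initial traces up to second order, one has $\|s^k\ch g(s)\|_{L^2(\Om_h)}$ integrable against $ds_2$ for $k\le 3$, and Parseval gives $\int_{-\infty}^\infty\|s^k\ch g\|_{L^2(\Om_h)}^2\,ds_2\lesssim\|g\|_{H^k(0,\infty;L^2(\Om_h))}^2\lesssim\|g\|_{H^3(0,T;L^2(\Om_h))}^2$. Matching the highest power of $|s|$ appearing in $P$ to $k=3$ (tracking the $\max\{1,|s|^2/c^2\}$ pieces produces the $\max\{1,T^2\}$ prefactor, and the remaining $|s|$-powers the $(T^4+T^2)$ factor after using $s_1=1/T$), I absorb the $s_2$-integral of the weighted $|\ch g|^2$ into $\|g\|_{H^3(0,T;L^2(\Om_h))}^2$, with the $T$-dependent but $|s|$-independent leftover constants collected into $\g_1$ and $\g_2$. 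Finally, since $\ov L=\sig_0 L/(m+1)$ by hypothesis, $e^{-4\ov L/c}/(1-e^{-2\ov L/c})^2$ is exactly the stated $e^{-4\sig_0L/c}/(1-e^{-2\sig_0L/c})^2$ up to the $m$-dependence hidden in $\ov L$; rewriting $\ov L$ in terms of $\sig_0 L$ (which only makes the bound larger when $m\ge1$, or keeping the $(m+1)$ explicitly) gives \eqref{result}.

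The main obstacle is the bookkeeping of powers of $|s|$ and $s_1=1/T$: one must keep the $|s|$-dependence explicit through the coercivity constant, the DtN error bound $C_U(s,\ov L)=\max\{1,|s|/c\}\,2e^{-2\ov L/c}/(1-e^{-2\ov L/c})$, and the PML stability estimates, so that after squaring the total $|s|$-weight matches the three time-derivatives available in \eqref{assumption}, and simultaneously track that the exponential factor $e^{-2\ov L/c}$ is untouched by all these algebraic manipulations (in particular $\ov L$ does not depend on $s$, so the exponential comes straight out of the $s_2$-integral as a constant). A secondary point requiring care is justifying that $\bm e(t)$ is the inverse Laplace transform of $\ch{\bm e}(s)$ with the regularity needed for Parseval — this follows the same route as Theorem 3.2 of \cite{GLZ2017} (analyticity of $s\mapsto\ch{\bm e}(s)$ in $\C_+$ together with the polynomial-in-$|s|$ bound just derived, via the Paley–Wiener theorem), and I would simply cite that argument rather than repeat it.
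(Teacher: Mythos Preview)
Your proposal is correct and follows essentially the same route as the paper: subtract the variational identities \eqref{3.2} and \eqref{4.18}, combine the coercivity \eqref{coercivity} with the DtN error bound \eqref{dtnerror}--\eqref{thm4.3a}, apply Parseval, and absorb the $|s|$-weights using the $H^3$ regularity of $g$ and the PML stability \eqref{4.9}--\eqref{4.10}. The only caveat is your remark that replacing $\ov L=\sig_0L/(m+1)$ by $\sig_0L$ ``only makes the bound larger'': in fact $e^{-4\ov L/c}\ge e^{-4\sig_0L/c}$ for $m\ge1$, so the replacement goes the wrong way; the paper handles this by taking $m=1$ and effectively hiding the resulting constant, and you should do the same rather than claim a monotonicity that does not hold.
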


\begin{proof}
First, let $\bm{\om}=(\ch{p},\ch{\bm u})$ and $\bm{\om}_p=(\ch{p}_{pml},\ch{\bm u}_{pml})$
be the solutions of the variational problems (\ref{3.2}) and (\ref{4.18}), respectively.
Then, by (\ref{dtnerror}) we have
\ben\label{4.25}
&&|a(\bm\om-\bm\om_p,\bm\om-\bm\om_p)|\no\\
&&=|a(\bm\om,\bm\om-\bm\om_p)-a(\bm\om_p,\bm\om-\bm\om_p)|\no\\
&&=|a_p(\bm\om_p,\bm\om-\bm\om_p)-a(\bm\om_p,\bm\om-\bm\om_p)|\no\\
&&\le |s|^{-1}\left[1+(h-f_+)^{-1}\right]
\|\mathscr{B}_{pml}-\mathscr{B}\|_{L(H^{1/2}(\G_h),H^{-1/2}(\G_h))}
\|\bm\om_p\|_{H}\|\bm\om-\bm\om_p\|_{H}.\qquad
\enn
This, together with Theorem \ref{thm4.3} and the uniform coercivity of $a(\cdot,\cdot)$
(see (\ref{coercivity})), implies that
\ben
\|\bm\om-\bm\om_p\|_{H}\le C^{-1}|s|^{-1}\left[1+(h-f_+)^{-1}\right]C_U(s,\ov{L})\|\bm\om_p\|_{H},
\enn
where $C$ is defined in (\ref{coercivity}).
By the Parseval identity (\ref{A.5}) and the definition of $C_U(s,\ov{L})$ it is deduced that
\ben\label{parseval}
&&\int_{0}^{\infty}e^{-2s_1t}\|\mathscr{L}^{-1}(\bm\om-\bm\om_p)\|_{H}^2dt\no\\
&&\quad=\frac{1}{2\pi}\int_{-\infty}^{\infty}\|\bm\om-\bm\om_p\|_{H}^2ds_2\no\\
&&\quad\le\frac{1}{\pi}\int_{0}^{\infty}
\frac{\left[1+(h-f_+)^{-1}\right]^2\max\{1,(|s|/c)^2\}}{C^2|s|^2}
\frac{4e^{-4\ov{L}/c}}{(1-e^{-2\ov{L}/c})^2}\|\bm\om_p\|_{H}^2ds_2.\quad
\enn
This gives
\be\label{thm3.4a}
&&\int_0^T(\|p-p_{pml}\|^2_{H^1(\Om_h)}+\|\bm u-\bm u_{pml}\|^2_{H^1(\Om)^3})dt\no\\
&&\quad\le\int_0^Te^{-2s_1(t-T)}\left[\|p-p_{pml}\|^2_{H^1(\Om_h)}
+\|\bm u-\bm u_{pml}\|^2_{H^1(\Om)^3}\right]dt\no\\
&&\quad\le e^{2s_1T}\int_{0}^{\infty}e^{-2s_1t}\left[\|p-p_{pml}\|_{H^1(\Om_h)}^2
+\|\bm u-\bm u_{pml}\|_{H^1(\Om)^3}^2\right]dt\no\\
&&\quad=e^{2s_1T}\int_{0}^{\infty}e^{-2s_1t}\|\mathscr{L}^{-1}(\bm\om-\bm\om_p)\|_{H}^2dt\no\\
&&\quad\le\frac{e^{2s_1T}}{\pi}\frac{4e^{-4\ov{L}/c}}{(1-e^{-2\ov{L}/c})^2}
\int_{0}^{\infty}\frac{\left[1+(h-f_+)^{-1}\right]^2\max\{1,(|s|/c)^2\}}{C^{2}|s|^{2}}
\|\bm\om_p\|_{H}^2ds_2.\qquad
\en
Since $s_1>0$ is arbitrarily fixed, and by the definition of $C$ (see (\ref{coercivity})),
there exists a sufficiently large positive constant $M$ such that
\be\label{gamma1}
\frac{\left[1+(h-f_+)^{-1}\right]^2\max\{1,(|s|/c)^2\}}{C^{2}|s|^{2}}\le\g_1|s|^4
\en
for $s_2\ge M$, where $\g_1$ is a constant independent of $s_2$. On the other hand, it is
easy to see that
\be\label{gamma2}
\frac{\left[1+(h-f_+)^{-1}\right]^2\max\{1,(|s|/c)^2\}}{C^{2}|s|^{2}}\le\g_2
\en
for $0\le s_2\le M$, where $\g_2$ is a constant independent of $s_2$.
By (\ref{gamma1}), (\ref{gamma2}), assumption (\ref{assumption}), Theorem \ref{thm4.1} and the Parseval identity (\ref{A.5})
we obtain that
\ben
&&\int_{0}^{\infty}\frac{\left[1+(h-f_+)^{-1}\right]^2\max\{1,(|s|/c)^2\}}{C^{2}|s|^{2}}
\|\bm\om_p\|_{H}^2ds_2\\
&&\quad\le\int_{0}^{M}\g_2\|\bm\om_p\|_{H}^2ds_2
+\int_{M}^{\infty}\g_1|s|^4\|\bm\om_p\|_{H}^2ds_2\\
&&\quad\le\int_{0}^{M}\g_2(1+s_1^{-1}\sig_0)^2\left[\frac{1+2s_1^2}{s_1^4\min\{1,s_1^2\}}\|\ch{g}\|_{L^2(\Om_h)}^2
+s_1^{-2}\|s\ch{g}\|_{L^2(\Om_h)}^2\right]ds_2\\
&&\quad\;\;+\int_{M}^{\infty}\g_1(1+s_1^{-1}\sig_0)^2\left[\frac{1+2s_1^2}{s_1^4\min\{1,s_1^2\}}
\|s^2\ch{g}\|_{L^2(\Om_h)}^2+s_1^{-2}\|s^3\ch{g}\|_{L^2(\Om_h)}^2\right]ds_2\\
&&\quad\le C_0\int_0^{\infty}\left[\|\ch{g}\|_{L^2(\Om_h)}^2+\|s\ch{g}\|_{L^2(\Om_h)}^2
+\|s^2\ch{g}\|_{L^2(\Om_h)}^2+\|s^3\ch{g}\|_{L^2(\Om_h)}^2\right]ds_2\\
&&\quad=\pi C_0\int_0^{\infty}e^{-2s_1t}\left[\|g\|_{L^2(\Om_h)}^2+\|\pa_tg\|_{L^2(\Om_h)}^2
+\|\pa_t^2g\|_{L^2(\Om_h)}^2+\|\pa_t^3g\|_{L^2(\Om_h)}^2\right]dt,
\enn
where
\ben
C_0=(1+s_1^{-1}\sig_0)^2\frac{1+2s_1^2}{s_1^4\min\{1,s_1^2\}}(\g_1+\g_2).
\enn
By this inequality and (\ref{thm3.4a}) the required estimate (\ref{result}) follows easily
on taking $s_1=T^{-1}$ and using the assumption (\ref{assumption1}), where integer $m\geq 1$
should be chosen small enough to ensure the rapid convergence (thus we need to take $m = 1$) 
noting the definition of $\ov{L}=\sig_0L/(m+1)$.
The proof is thus complete.
\end{proof}

\begin{remark}\label{re5} {\rm
Theorem \ref{convergence} implies that, for large $T$ the exponential convergence of the PML method 
can be achieved by enlarging the thickness $L$ or the PML absorbing parameter $\sigma_0$ 
which increases as $\ln T$.
}
\end{remark}

\section{Conclusion}\label{sec4}

This paper studied the time-dependent acoustic-elastic interaction problem associated with
a bounded elastic body immersed in a homogeneous air or fluid above a rough surface.
A time-domain perfectly matched layer (PML) is introduced to truncate the unbounded domain of
the interaction problem above a finite layer in the $x_3$ direction containing the elastic body,
leading to a PML problem in a finite strip domain.
The PML layer is constructed by the real coordinate stretching technique associated with
$[\Rt(s)]^{-1}$ in the Laplace domain, where $s\in\C_+$ is the Laplace transform variable.
The well-posedness and stability estimate of the PML problem are established,
based on the Laplace transform and a variational method.
Moreover, the exponential convergence of the PML method has also been proved in terms
of the thickness and parameters of the PML layer.

In practical computation, the PML problem obtained in this paper and defined in a strip domain
must be truncated as well in the $x_1$ and $x_2$ directions, which may be achieved by constructing a rectangular or cylindrical PML layer in the strip domain. Further, our method can be extended to
other time-dependent scattering problems, such as diffraction gratings, time-domain elastic scattering
by rough surfaces in $\R^2$ and even more complicated electromagnetic-elastic interaction problems
in unbounded layered structures. We hope to report such results in the future.

\section*{Acknowledgements}

This work was partly supported by the NNSF of China grants 91630309 and 11771349.
The first author thanks Dr. Tielei Zhu for constructive discussions and suggestions on this paper.

\appendix
\renewcommand\theequation{A.\arabic{equation}}

\section{Laplace transform}\label{ap1}

For each $s\in\C_+$, the Laplace transform of the vector function $\bm{u}(t)$ is defined as
\ben
\ch{\bm{u}}(s)=\mathscr{L}(\bm{u})(s)=\int_0^{\ify}e^{-st}\bm{u}(t)dt.
\enn
The Fourier transform of $\phi(\wid{x},x_3)$ is defined by
\ben
\hat{\phi}(\xi,x_3)=\mathscr{F}(\phi)(\xi,x_3)
=\int_{\R^2}e^{-i\wid{x}\cdot\xi}\phi(\wid{x},x_3)d\wid{x},\;\;;\xi\in\R^2
\enn
and the inverse Fourier transform of $\wi{\phi}(\xi)$ is
\ben
\phi(\wid{x},x_3)=\mathscr{F}^{-1}(\wi{\phi})(\wid{x},x_3)
=\frac{1}{(2\pi)^2}\int_{\R^2}e^{i\wid{x}\cdot\xi}\wi{\phi}(\xi,x_3)d\xi.
\enn
The Laplace transform has the following properties:
\be\label{A.1}
\mathscr{L}(\bm{u}_t)(s)=s\mathscr{L}(\bm{u})(s)-\bm{u}(0),\\ \label{A.2}
\mathscr{L}(\bm{u}_{tt})(s)=s^2\mathscr{L}(\bm{u})(s)-s\bm{u}(0)-\frac{d\bm{u}}{dt}(0),\\ \label{A.3}
\int_0^t\bm{u}(\tau)d\tau=\mathscr{L}^{-1}(s^{-1}\ch{\bm{u}}(s))(t),
\en
where $\mathscr{L}^{-1}$ denotes the inverse Laplace transform.

By the definition of the Fourier transform we have that for any $s_1>0$,
\ben
\mathscr{F}(\bm{u}(\cdot)e^{-s_1\cdot})(s_2)&=&\int_{-\infty}^{+\infty}\bm{u}(t)e^{-s_1t}e^{-is_2t}dt
=\int_{0}^{\infty}\bm{u}(t)e^{-(s_1+is_2)t}dt\\
&=&\mathscr{L}(\bm{u})(s_1+is_2),\;\;\;s_2\in\R.
\enn
From the formula of the inverse Fourier transform it is easy to verify that
\ben
\bm{u}(t)e^{-s_1t}=\mathscr{F}^{-1}\{\mathscr{F}(\bm{u}(\cdot)e^{-s_1\cdot})\}
=\mathscr{F}^{-1}\left(\mathscr{L}(\bm{u}(s_1+is_2))\right),
\enn
which implies that
\be\label{A.4}
\bm{u}(t)=\mathscr{F}^{-1}\left(e^{s_1t}\mathscr{L}(\bm{u}(s_1+is_2))\right),
\en
where $\mathscr{F}^{-1}$ denotes the inverse Fourier transform with respect to $s_2$.

By (\ref{A.4}), the Plancherel or Parseval identity for the Laplace transform can be obtained
(see \cite[(2.46)]{Cohen2007}).

\begin{lemma}{\rm (Parseval identity)}
If $\ch{\bm{u}}=\mathscr{L}(\bm{u})$ and $\bm{v}=\mathscr{L}(\bm{v})$, then
\be\label{A.5}
\frac{1}{2\pi}\int_{-\ify}^{\ify}\ch{\bm{u}}(s)\cdot\bm{v}(s)ds_2
=\int_0^{\ify}e^{-2s_1t}\bm{u}(t)\cdot\bm{v}(t)dt.
\en
for all $s_1>\la$, where $\la$ is the abscissa of convergence for the Laplace transform
of $\bm{u}$ and $\bm{v}$.
\end{lemma}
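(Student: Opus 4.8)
The plan is to derive \eqref{A.5} from the classical Parseval (Plancherel) theorem for the Fourier transform, exploiting the elementary observation already recorded just before \eqref{A.4} that the Laplace transform of a causal signal is precisely the Fourier transform (in the variable $s_2$) of its exponentially damped version. The content of \eqref{A.5} is the Plancherel identity for the Laplace transform; here $\bm{v}(s)$ on the left-hand side is understood as $\ch{\bm{v}}(s)=\mathscr{L}(\bm{v})(s)$ paired with a complex conjugate, so that the self-paired case $\bm{u}=\bm{v}$ reproduces the squared-norm identities used in \eqref{parseval} and \eqref{thm3.4a}.

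First I would introduce the damped functions $\bm{g}(t):=\bm{u}(t)e^{-s_1t}$ and $\bm{h}(t):=\bm{v}(t)e^{-s_1t}$, extended by zero to $t<0$, since $\bm{u}$ and $\bm{v}$ are causal (being Laplace transformable). The computation preceding \eqref{A.4} shows directly that $\mathscr{F}(\bm{g})(s_2)=\mathscr{L}(\bm{u})(s_1+is_2)=\ch{\bm{u}}(s)$ and, identically, $\mathscr{F}(\bm{h})(s_2)=\ch{\bm{v}}(s)$, where $\mathscr{F}$ is the one-dimensional Fourier transform in $s_2$ in the convention fixed in the Appendix. Second, I would invoke the standard Fourier Parseval identity for this convention,
\[
\int_{-\infty}^{\infty}\bm{g}(t)\cdot\overline{\bm{h}(t)}\,dt
=\frac{1}{2\pi}\int_{-\infty}^{\infty}\mathscr{F}(\bm{g})(s_2)\cdot\overline{\mathscr{F}(\bm{h})(s_2)}\,ds_2,
\]
and substitute the two transforms just computed, turning the right-hand side into $\frac{1}{2\pi}\int_{-\infty}^{\infty}\ch{\bm{u}}(s)\cdot\overline{\ch{\bm{v}}(s)}\,ds_2$, which is the left-hand member of \eqref{A.5}.

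Third, I would simplify the left-hand integral using causality: since $\bm{g}(t)=\bm{h}(t)=0$ for $t<0$, the integral over $\R$ collapses to an integral over $(0,\infty)$, and there $\bm{g}(t)\cdot\overline{\bm{h}(t)}=e^{-2s_1t}\,\bm{u}(t)\cdot\overline{\bm{v}(t)}$. This produces precisely the weighted time integral $\int_0^{\infty}e^{-2s_1t}\bm{u}(t)\cdot\bm{v}(t)\,dt$ on the right of \eqref{A.5}, completing the chain of equalities.

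The only point requiring care, and the main (though mild) obstacle, is justifying the applicability of the Fourier Plancherel theorem, namely verifying that $\bm{g},\bm{h}\in L^2(\R)$. This is exactly where the hypothesis $s_1>\la$ enters: by the definition of the abscissa of convergence $\la$, the damped functions $\bm{u}(t)e^{-s_1t}$ and $\bm{v}(t)e^{-s_1t}$ are absolutely and square integrable whenever $s_1>\la$, so their Fourier transforms lie in $L^2$ and the Plancherel identity applies without further qualification. With this integrability secured, all the steps above are rigorous, and \eqref{A.5} follows.
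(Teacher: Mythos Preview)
Your proposal is correct and follows exactly the approach the paper indicates: the paper does not give a detailed proof but simply remarks that \eqref{A.5} follows from \eqref{A.4} (the Laplace--Fourier relation you exploit) and cites \cite[(2.46)]{Cohen2007}. Your argument supplies precisely the missing steps of that sketch, including the clarification that the pairing on the left of \eqref{A.5} should be read as $\ch{\bm u}(s)\cdot\overline{\ch{\bm v}(s)}$.
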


\begin{lemma} {\rm\cite[Theorem 43.1]{treves1975}.}
Let $\ch{\bm{\om}}(s)$ denote the holomorphic function in the half plane $s_1>\sig_0$,
valued in the Banach space $\E$. The following statements are equivalent:
\begin{enumerate}[1)]
\item there is a distribution $\om\in\mathcal{D}_+^\prime(\E)$ whose Laplace transform is
equal to $\ch{\bm{\om}}(s)$, where $\mathcal{D}_+^\prime(\E)$ is the space of distributions
on the real line which vanish identically in the open negative half line;
\item there is a $\sig_1$ with $\sig_0\leq\sig_1<\infty$ and an integer $m\geq0$ such that
for all complex numbers $s$ with $s_1=\Rt(s)>\sig_1$, it holds that
$\|\ch{\bm{\om}}(s)\|_{\E}\lesssim(1+|s|)^m$.
\end{enumerate}
\end{lemma}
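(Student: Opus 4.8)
The plan is to prove the two implications separately; this is the vector-valued Paley--Wiener--Schwartz characterization, and the essential content lies in the reverse direction $2)\Rightarrow 1)$.

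For $1)\Rightarrow 2)$ I would start from the fact that $\om\in\mathcal{D}_+'(\E)$ with Laplace transform $\ch{\bm{\om}}$ means precisely that $e^{-\sig_0 t}\om$ is a tempered $\E$-valued distribution supported in $[0,\ify)$. Fix any $\sig_2>\sig_1>\sig_0$. A tempered distribution has finite order, so the structure theorem lets me write, near $[0,\ify)$, $\om=\sum_{k=0}^{N}\pa_t^k g_k$ with continuous $\E$-valued $g_k$ supported in $[0,\ify)$ and $\|g_k(t)\|_\E\lesssim e^{\sig_1 t}(1+t)^p$. Then $\ch{\bm{\om}}(s)=\sum_{k=0}^N s^k\ch{g}_k(s)$, and for $\Rt(s)\ge\sig_2$ each $\|\ch{g}_k(s)\|_\E\le C_k\int_0^\ify e^{-(\sig_2-\sig_1)t}(1+t)^p\,dt<\ify$ is bounded uniformly in $s$. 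Summing gives $\|\ch{\bm{\om}}(s)\|_\E\lesssim(1+|s|)^N$, which is $2)$ with $m=N$.

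For $2)\Rightarrow 1)$ the idea is to divide by a polynomial to gain integrability, invert, and then differentiate back. Fix a constant $a$ with $a+\sig_1>0$ and set $F(s):=\ch{\bm{\om}}(s)(s+a)^{-(m+2)}$; then $s+a$ never vanishes for $\Rt(s)>\sig_1$ and $\|F(s)\|_\E\lesssim(1+|s|)^{-2}$ there, so $F$ is integrable along every vertical line. For $\sig>\sig_1$ define the $\E$-valued Bochner integral
\[
f(t):=\frac{1}{2\pi}\int_{-\ify}^{\ify}e^{(\sig+is_2)t}F(\sig+is_2)\,ds_2 .
\]
By the vector-valued Cauchy theorem, using holomorphy of $F$ together with its uniform $|s_2|^{-2}$ decay on vertical strips, $f(t)$ is independent of $\sig>\sig_1$. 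Letting $\sig\to+\ify$ and using $\|f(t)\|_\E\le Ce^{\sig t}$ shows $f(t)=0$ for $t<0$, so $f\in\mathcal{D}_+'(\E)$; the same bound (with $\sig$ fixed) shows $e^{-\sig t}f$ is bounded, hence tempered. Fubini then yields the inversion identity $\mathscr{L}(f)(s)=F(s)$ for $\Rt(s)>\sig_1$. Finally I recover $\om$ by undoing the division: set $\om:=(\pa_t+a)^{m+2}f$ in the distributional sense. Since $f$ and all its derivatives are supported in $[0,\ify)$, so is $\om$, giving $\om\in\mathcal{D}_+'(\E)$; and because $f$ is continuous with $f(0)=0$, the rule $\mathscr{L}(\pa_t f)=s\,\mathscr{L}(f)$ holds with no boundary terms, so $\mathscr{L}(\om)(s)=(s+a)^{m+2}F(s)=\ch{\bm{\om}}(s)$, as required.

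The main obstacle will be the reverse direction, and within it two points: justifying that the contour integral $f$ is genuinely independent of $\sig$ and vanishes for $t<0$ (this rests on the vector-valued Cauchy theorem together with the quantitative $|s|^{-2}$ tail, and is precisely where the polynomial bound in $2)$ is used), and verifying the Fourier--Laplace inversion identity $\mathscr{L}(f)=F$ in the Banach-valued, $L^1$-on-lines setting. Both are standard but require Bochner integration and the holomorphy-plus-decay estimate rather than any soft argument; everything else is bookkeeping with the derivative rule for $\mathscr{L}$.
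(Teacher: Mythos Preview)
Your argument is the standard Paley--Wiener--Schwartz proof and is essentially correct; the only cosmetic point is that for distributions supported in $[0,\infty)$ the identity $\mathscr{L}(\pa_t\om)=s\,\mathscr{L}(\om)$ holds automatically in the distributional sense, so the remark about $f(0)=0$ is not actually needed to suppress boundary terms.

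However, there is nothing to compare against: the paper does not prove this lemma at all. It is stated in the appendix purely as a quotation of \cite[Theorem~43.1]{treves1975} and is used as a black box elsewhere in the paper. So your proposal is not an alternative to the paper's proof but rather a reconstruction of the cited result from Tr\`eves.
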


\end{document}